\newcommand{\showcomments}{yes}
\renewcommand{\showcomments}{no}
\newtheorem{thm}{Theorem}[section]
\newtheorem{lem}[thm]{Lemma}
\newtheorem{prop}[thm]{Proposition}
\theoremstyle{definition}
\newtheorem{rem}[thm]{Remark}
\newtheorem{exmp}[thm]{Example}
\newtheorem{prob}[thm]{Problem}
\newcommand{\field}[1]{\mathbb{#1}}
\newcommand{\integers}{\ensuremath{\field{Z}}}
\newcommand{\rationals}{\ensuremath{\field{Q}}}
\newcommand{\naturals}{\ensuremath{\field{N}}}
\newcommand{\reals}{\ensuremath{\field{R}}}
\newcommand{\sslash}{{/\mkern-6.5mu/}}
\DeclareMathOperator{\lcm}{lcm}
\newsavebox{\commentbox}
\newenvironment{com}%
{\ifthenelse{\equal{\showcomments}{yes}}%
{\footnotemark
        \begin{lrbox}{\commentbox}
        \begin{minipage}[t]{1.25in}\raggedright\sffamily\tiny
        \footnotemark[\arabic{footnote}]}
{\begin{lrbox}{\commentbox}}}%
{\ifthenelse{\equal{\showcomments}{yes}}%
{\end{minipage}\end{lrbox}\marginpar{\usebox{\commentbox}}}
{\end{lrbox}}}
\newcommand{\Gonna}{}
\begin{document}
\title{Residually Finite Tubular Groups}
\author[N.~Hoda]{Nima Hoda}
           \address{Dept. of Math. \& Stats.\\
                    McGill Univ. \\
                    Montreal, QC, Canada H3A0B9 }
           \email{nima.hoda@mail.mcgill.ca}
\author[D.~T.~Wise]{Daniel T. Wise}
           \address{Dept. of Math. \& Stats.\\
                    McGill Univ. \\
                    Montreal, QC, Canada H3A0B9 }
           \email{wise@math.mcgill.ca}
\author{Daniel J. Woodhouse}
      \address{Dept. of Math.\\
               Technion \\
               Haifa 32000, Israel}
               \email{woodhouse.da@technion.ac.il}
\subjclass[2010]{20E26}
\keywords{Residually Finite, Tubular Groups}
\date{\today}
\thanks{Research supported by NSERC. The third named author was supported by the Israel Science Foundation (grant 1026/15).}

\begin{abstract}
 A tubular group $G$ is a finite graph of groups with $\mathbb{Z}^2$ vertex groups and $\mathbb{Z}$ edge groups.
 We characterize  residually finite tubular groups: $G$ is residually finite if and only if its edge groups are separable.
 Methods are provided to determine if $G$ is residually finite.
 When $G$ has a single vertex group an algorithm is given to determine residual finiteness.
  \end{abstract}

\maketitle

\section{Introduction}

A f.g.\ group $G$ is \emph{tubular} if it splits as a finite graph of groups with $\integers$ edge groups and $\integers^2$ vertex groups.
A group $G$ is \emph{residually finite} if for each nontrivial $g\in G$,
there is a finite quotient of $G$ so that the image of $g$ is nontrivial.
The goal of this paper is to determine which tubular groups are residually finite.

The case where $G$ is a single HNN extension was handled by
Andreadakis, Raptis and Varsos 
\cite{AndreadakisRaptisVarsos88}. 
However the full complexity of the situation is not apparent for a single HNN extension, as  residual finiteness  coincides with virtual specialness whereas failure of residual finiteness coincides with a problematic Baumslag-Solitar subgroup.
Kim~\cite{Kim04} proved that having \emph{isolated cyclic subgroups} is a sufficient condition for residual finiteness.
In the language of this paper, isolated cyclic subgroups translates to saying the tubular group is \emph{primitive}.

\begin{com}
{\bf Problem:} Are there residually finite tubular groups that are quasi-isometric to non-residually finite tubular groups?
\end{com}

\subsection{Quick Survey of Results about Tubular Groups}
Tubular groups form a class of seemingly straightforward groups 
that are increasingly recognized as a surprisingly rich source of diverse behavior.
Burns, Karass, and Solitar gave the first example of a f.g.\ 3-manifold group that is not subgroup separable, and their example arises as a tubular group \cite{BKS87}.
Croke and Kleiner used this same tubular group to show that the boundary of a CAT(0) space is not an invariant of CAT(0) groups~\cite{CrokeKleiner2000}.
Gersten gave a tubular group as an example of a free-by-cyclic group that does not act 
properly and semi-simply on a CAT(0) space \cite{Gersten94}.
Wise gave an example of a tubular group that is CAT(0) but not Hopfian \cite{Wise96}.
Brady and Bridson~\cite{BradyBridson2000} characterised the Dehn functions of \emph{snowflake groups}, a subclass of tubular groups, to show that there are f.p.\ groups with isoperimetric functions $n^d$ where $d\in D$ is a dense subset of $[2,\infty)$.
Gardam and Woodhouse showed that certain Snowflake groups embed as finite index subgroups of one-relator groups~\cite{GardamWoodhouse17}, and Button observed that many of these groups are not residually finite~\cite{Button}.
Cashen gave a quasi-isometric classification of tubular groups \cite{Cashen2010}.
Wise gave a criterion for a tubular group to be cubulated \cite{WiseGerstenRevisited}.
Button showed that if a tubular group is free-by-cyclic, then it is cubulated \cite{Button17}.
Woodhouse classified which cubulations are finite dimensional and showed that a tubular group is virtually special if and only if it acts freely on a finite dimensional CAT(0) cube complex \cite{WoodhouseTubularAction, Woodhouse15VirtuallySpecial}.

\subsection{Statement of Main Result}
A f.g.\ group $G$ is \emph{tubular} if it splits as a finite graph of
groups with $\integers^2$ vertex groups and $\integers$ edge groups.
A tubular group $G$ is \emph{primitive} if each edge group is a
maximal cyclic subgroup of its vertex groups, and hence of $G$.  A
nontrivial element $(a,b) \in \integers^2$ is \emph{primitive} if
$\gcd(a,b) = 1$, that is $(a,b)$ is not a ``proper power.''

There are two goals to this paper. 
The first is to characterize which tubular groups are residually finite, and the second is to provide practical means of deciding the question.
The following theorem, addressing the first goal, is a special case of a more extensive characterization given in
Theorem~\ref{thm:main equivalence}. 

\begin{thm}
 A tubular group is residually finite if and only if it is virtually primitive.
 \end{thm}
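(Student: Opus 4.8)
The plan is to prove the two implications separately; the reverse direction, that residual finiteness implies virtual primitivity, carries essentially all of the difficulty.

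First I would dispatch the forward direction. I would record the standard fact that a finite-index subgroup $H$ of a tubular group $G$ is again tubular: $H$ acts on the Bass--Serre tree of the given splitting, and the vertex and edge stabilizers are finite-index subgroups of conjugates of the $\integers^2$ and $\integers$ groups, hence are again $\integers^2$ and $\integers$. Thus \emph{virtually primitive} means precisely that some such $H$ is primitive. By Kim's theorem \cite{Kim04} a primitive tubular group has isolated cyclic subgroups and is therefore residually finite, so $H$ is residually finite. It then remains to upgrade this to $G$ using that residual finiteness is inherited by finite-index overgroups: replacing $H$ by its normal core $N \trianglelefteq G$ (still finite index and still residually finite as a subgroup of $H$), a nontrivial $g \in G$ is separated either by the finite quotient $G/N$, when $g \notin N$, or, when $g \in N$, by a characteristic finite-index subgroup of the finitely generated group $N$ that avoids $g$; such a subgroup is normal in $G$ and yields the required finite quotient.

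For the reverse direction I would argue the contrapositive: assuming $G$ is not virtually primitive, I would exhibit a non-residually-finite subgroup of $G$, which forces $G$ itself to be non-residually-finite since residual finiteness passes to subgroups. The target is a Baumslag--Solitar group $BS(m,n) = \langle a, t \mid t a^m t^{-1} = a^n \rangle$ with $\min(|m|,|n|) \ge 2$ and $|m| \ne |n|$, which is known not to be residually finite. To find one I would attach to each edge $e$ and incident vertex $v$ its \emph{multiplicity} $\mu_e^v \ge 1$, the index of the edge-group image inside the maximal cyclic subgroup of $\integers^2_v$ containing it, so that primitivity is exactly the condition that every $\mu_e^v = 1$. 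The benign configurations---where along each relevant cycle of the graph of groups the associated exponents are balanced ($|m| = |n|$) or one of them is $\pm 1$ (the ascending case)---can be primitivized after passing to a suitable finite-index subgroup; the crux is to show that if no finite-index subgroup is primitive, then some cycle must carry the genuinely unbalanced pattern $\min(|m|,|n|) \ge 2$, $|m| \ne |n|$, and hence a bad Baumslag--Solitar subgroup.

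The hard part will be making this extraction precise in full generality. Because passing to a finite-index subgroup both rescales the multiplicities $\mu_e^v$ and can alter the primitive directions of the edge inclusions, \emph{not virtually primitive} is genuinely a statement about all finite-index subgroups simultaneously, and turning it into a single concrete cycle with a fixed mismatched pair $(m,n)$ requires controlling how the multiplicities and directions transform under the induced splittings. I would also have to handle edges whose two inclusions into the vertex $\integers^2$'s are non-parallel and cycles that pass through several distinct vertex groups. My expectation is to reduce the general situation to the single-HNN case analyzed by Andreadakis--Raptis--Varsos \cite{AndreadakisRaptisVarsos88} by restricting to the subgroup carried by a single offending cycle, where the exponents $m$ and $n$ can be read off directly, and then to deduce that $G$ is not residually finite from the failure of residual finiteness of this subgroup.
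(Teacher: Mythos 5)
Your forward direction (virtually primitive $\Rightarrow$ residually finite) is essentially correct: invoking Kim~\cite{Kim04} for the primitive case and then upgrading to finite-index overgroups via the normal core is a valid, if less self-contained, substitute for the paper's own argument, which proves that a primitive tubular group is residually finite directly by mapping to local quotients $G \sslash nG$ (graphs of finite groups, hence virtually free and residually finite) and checking with a normal-form argument that nontrivial elements survive for large $n$.

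The reverse direction, however, has a fatal gap, not merely a hard step left open. Your plan is to show that a tubular group which is not virtually primitive contains a Baumslag--Solitar subgroup $BS(m,n)$ with $\min(|m|,|n|) \ge 2$ and $|m| \ne |n|$. That implication is false, and the counterexamples appear in this very paper: the snowflake groups $G_{pq} = \langle \mathbb{Z}^2, s, t \mid (q,0)^s = (p,1),\ (q,0)^t = (p,-1) \rangle$ with $q \nmid 2p$ are not residually finite (Section~\ref{sec:SnowflakeGroups}, recovering Button~\cite{Button}), hence not virtually primitive, yet by Gardam--Woodhouse~\cite{GardamWoodhouse17} they contain no $BS(m,n)$ with $m \ne \pm n$. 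Your proposed reduction to a ``single offending cycle'' collapses on the same example: each single-edge sub-tubular group, e.g.\ $\langle \mathbb{Z}^2, s \mid (q,0)^s = (p,1) \rangle$, is residually finite, since $(q,0)$ and $(p,1)$ form a basis of the subgroup they generate and are therefore both primitive in it (a regulating $\mathcal{E}$-tuple with $k_e = 1$). The obstruction is not carried by any one edge; it arises from the interaction of the two edges inside a single vertex group, where $\langle (q,0),(p,1),(p,-1) \rangle$ contains $(2p,0)$, so $(q,0)$ fails to be primitive there exactly when $q \nmid 2p$. The paper's introduction warns of precisely this point: the coincidence of non-residual-finiteness with problematic Baumslag--Solitar subgroups is special to the single HNN extension case of Andreadakis--Raptis--Varsos~\cite{AndreadakisRaptisVarsos88}, which is why your reduction to that case cannot succeed. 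The paper's actual proof of your hard direction is entirely different and quite soft: residual finiteness implies the edge groups are separable (conjugates of vertex groups are maximal abelian, hence separable by Lemma~\ref{lem:separability}, and each edge group is an intersection of conjugates of its incident vertex groups), and separability then yields a finite-index subgroup in which every edge group is a direct factor of its incident vertex groups, i.e.\ a primitive finite-index subgroup.
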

 
 Although we are not able to settle the question of decidability in general, in the motivating case, where $G$ has a single vertex group and a single edge group we are able to provide the following, a consequence of Proposition~\ref{prop:singleVertexClassification} and Lemma~\ref{lem:tupalgo} in Section~\ref{sec:OneVertexSolution}.
 
 \begin{thm} \label{thm:decidability}
  Let $G$ be a tubular group with a single vertex group. Then there is an algorithm that decides in finite time if $G$ is residually finite or not. 
 \end{thm}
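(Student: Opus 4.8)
The plan is to reduce the decision problem to a finite arithmetic check. Since $G$ has a single vertex group, it is a multiple HNN extension of $\integers^2$, namely $G = \langle \integers^2, t_1, \dots, t_n \mid t_i \alpha_i t_i^{-1} = \beta_i \rangle$ where each $\alpha_i, \beta_i \in \integers^2$ is nonzero. Writing $\alpha_i = a_i \hat\alpha_i$ and $\beta_i = b_i \hat\beta_i$ with $\hat\alpha_i, \hat\beta_i$ primitive and $a_i, b_i \ge 1$, the group $G$ is primitive exactly when every multiplicity $a_i$ and $b_i$ equals $1$. By the characterization of residual finiteness (Theorem~\ref{thm:main equivalence}), $G$ is residually finite if and only if it is virtually primitive, so it suffices to exhibit an algorithm deciding whether some finite-index subgroup of $G$ is primitive.

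First I would determine how the edge data transforms on passing to finite index. Up to a further finite-index subgroup, a finite-index $H \le G$ may be taken to correspond to a finite cover of the underlying graph of groups in which the vertex group is replaced by a sublattice $L \le \integers^2$ and each loop $e_i$ lifts to finitely many elevations. The attaching vectors of an elevation are the $L$-primitive generators in the lines $\reals\hat\alpha_i$ and $\reals\hat\beta_i$, and the multiplicity of the elevation on each side is $a_i$ (resp.\ $b_i$) divided by the index $m_L(\hat\alpha_i)$ (resp.\ $m_L(\hat\beta_i)$) with which that primitive generator sits inside $L$; closing the elevation up around the loop imposes a monodromy constraint tying $m_L(\hat\alpha_i)$ and $m_L(\hat\beta_i)$ together through the unwrapping length of $t_i$. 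The content of Proposition~\ref{prop:singleVertexClassification} is to package these transformations into a clean combinatorial criterion: $G$ is virtually primitive precisely when the tuple of directions and multiplicities $(\hat\alpha_i, \hat\beta_i, a_i, b_i)$ satisfies a system of divisibility and commensurability relations --- most notably that self-commensurable edges (those with $\hat\alpha_i$ and $\hat\beta_i$ parallel) force $a_i = b_i$, reflecting the Baumslag--Solitar obstruction, together with the lattice compatibility needed to realize the prescribed indices $m_L$ on all relevant directions simultaneously.

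Granting this criterion, the decidability statement reduces to deciding solvability of a finite system of arithmetic constraints among the finitely many integer entries of the vectors $\alpha_i, \beta_i$. This is the role of Lemma~\ref{lem:tupalgo}: because there are finitely many edges and each constraint is a $\gcd$/$\lcm$ or divisibility relation, the search for an admissible sublattice $L$ can be bounded effectively --- for instance by the $\lcm$ of the multiplicities together with the pairwise relations among the directions --- so that only finitely many candidate tuples need to be tested. Combining Proposition~\ref{prop:singleVertexClassification} with Lemma~\ref{lem:tupalgo} then yields an algorithm that decides virtual primitivity, hence by Theorem~\ref{thm:main equivalence} residual finiteness, in finite time.

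The main obstacle I anticipate is the second step: proving that virtual primitivity is equivalent to an effectively checkable condition. A priori one must search over all finite-index subgroups, an infinite family, so the crux is showing that this search can be made finite. This requires an exact description of how multiplicities evolve under elevation --- in particular that passing to a sublattice can only rescale a multiplicity by the index of the relevant primitive generator --- together with a careful treatment of the monodromy condition ensuring that the two sides of each edge remain matched after lifting. Handling the interaction between edges whose directions are commensurable, where the choices of $m_L$ are not independent, is where the bookkeeping is most delicate.
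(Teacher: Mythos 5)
There is a genuine gap, and you name it yourself: your route runs through ``virtually primitive,'' i.e.\ a search over finite-index subgroups of $G$ (sublattices $L \le \integers^2$ together with finite covers of the graph, elevations, and a monodromy constraint), and you concede that proving this infinite search can be truncated to a finite one is ``the main obstacle.'' That truncation \emph{is} the theorem; nothing in your proposal supplies it. Moreover, you attribute to Proposition~\ref{prop:singleVertexClassification} a statement it does not make: it is not a criterion for virtual primitivity in terms of lattice compatibility and monodromy of elevations, but a structure theorem for \emph{regulating $\mathcal{E}$-tuples} --- a notion your argument never uses --- asserting that any regulating tuple must have the form $\bigl( m, m\frac{z_1}{t_1}, \ldots, m\frac{z_1\cdots z_{n-1}}{t_1\cdots t_{n-1}} \bigr)$ with $z_1\cdots z_n = t_1\cdots t_n$. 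So the citation does not patch the hole: the result you cite answers a question about a different, purely local object, and the bridge from that object to residual finiteness (Lemma~\ref{lem:goodtup} and condition~(6) of Theorem~\ref{thm:main equivalence}) is exactly what your outline lacks.

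The paper closes this gap by never searching over finite-index subgroups at all. By Theorem~\ref{thm:main equivalence}, $G$ is residually finite if and only if it admits a regulating $\mathcal{E}$-tuple: one integer $k_e$ per edge such that each $\varphi_e^{\pm}(k_e g_e)$ is primitive in the subgroup of $G_v$ generated by all the scaled edge images. This condition involves no covers, no elevations, and no monodromy; in particular the two sides of an edge need not be ``matched'' by any unwrapping length. Primitivity then forces $\frac{k_{i+1}t_i}{k_i} \in \integers$ for each $i$, and multiplying these relations around the cycle of edges telescopes to $z_1 \cdots z_n = t_1\cdots t_n$, an equation with only finitely many integer solutions $(z_1,\ldots,z_n)$; combined with Remark~\ref{rem:etupcp} (normalize $\gcd(k_e)=1$) this bounds the candidates to a finite, explicitly checkable list --- which is the content of Lemma~\ref{lem:tupalgo}, whose preprocessing also handles the degenerate cases (commensurable-but-distinct edge images force an immediate ``no,'' which is the correct form of your Baumslag--Solitar observation; edges with equal images can be discarded). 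Without something playing the role of this regulating-tuple characterization, your proposal remains a plan whose central step --- finiteness of the search --- is asserted rather than proved.
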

 
 To address tubular groups in general, 
 we introduce the \emph{expansion sequence} for a tubular group, which we motivate in the following subsection.

\subsection{Two Illustrative Examples} \label{subsec:ExpansionExamples}

The expansion sequence for a tubular group is nontrivial, even in the simple case of a graph of groups with a single vertex group and two edge groups.
Given a tubular group $G = G_0$ the expansion sequence is a series of homomorphisms.
At the $i$-th stage of the computation we obtain a tubular group $G_i$ and a homomorphism $G_{i-1} \rightarrow G_i$. 
The sequence $G = G_0 \rightarrow G_1 \rightarrow G_2 \rightarrow  \cdots$ is the \emph{expansion sequence}.
We are presented with a dichotomy: either the expansion sequence \emph{terminates} or it continues indefinitely, that is to say it is \emph{non-terminating}.
By Lemma~\ref{lem:primitive implies terminates}, a terminating expansion sequence is equivalent to $G$ being residually finite.
Ideally, we would like to determine if an expansion sequence is non-terminating after a finite number of steps.
The simplest way to verify this is if the sequnce starts repeating itself.
We call such sequences \emph{recurrent}.
Unfortunately, not all non-terminating expansion sequences are recurrent.
See Example~\ref{exmp:nonRecurrentExample}.
We conjecture however that if a tubular group is not residually finite, then some subtubular group will have an expansion sequence that repeats itself.

We give two examples of such computations to illustrate and motivate what will be happening in this paper.

\begin{exmp} \label{exmp:terminates}
 The tubular group $G$ below splits over a graph with a single vertex group and two edge groups.
 The elements $(1,0)$ and $(0,1)$ generate the vertex group $G_v = \integers^2$ and $s$ and $t$ are the stable letters
 associated to the edge groups.
  \[G \ = \  \langle \  \integers \times \integers, s, t \ \mid \ (1,0)^s = (2,2),\  (0,1)^t = (1,1) \ \rangle\]

 $G$ is not primitive since 
 $(2,2)$
  is not primitive in $G_v$.
 Note that $t$ conjugates a primitive element to a primitive element.
 We will construct a homomorphism $G\rightarrow G'$ from $G$ to another tubular group $G'$ with the same underlying graph, such that vertex and edge groups map injectively, and such that the stable letter $s$ conjugates a pair of primitive elements in $G'$.
 A simple way to do this is to add the element $(\frac12,0)$ to the vertex group and extend the conjugation by $s$ linearly so that $(\frac12,0)$ is conjugated to $(1,1)$.
We thus obtain the following new tubular group: 
\[G'  \ = \ \langle\  \textstyle\frac12\integers\times \integers, \; s, t \  \mid \  \;  (\textstyle{\frac12},0)^s = (1,1), \;  (0,1)^t  = (1,1)
 \ \rangle.\]
 
 There is a homomorphism $G \rightarrow G'$ that maps $(0,1)$, $ (1,0)$, $s$, and $t$ to themselves in $G'$.
 This morphism is  the \emph{expansion map}.
 As $G'$ is a primitive tubular group we say that we have found a \emph{primitive target} for $G$, which implies by Theorem~\ref{thm:main equivalence} that $G$ is residually finite.
\end{exmp}

\begin{exmp}\label{exmp:no terminate}
Consider the following tubular group $G$ having a single vertex group and two edge groups. 
 Note that $G$ is almost identical to the group in Example~\ref{exmp:terminates}, with a slight adjustment to the elements conjugated to $(1,0)$ and $(0,1)$.
 \[G \  = \ \langle \  \integers\times\integers, \ s, t \ \mid \  (1,0)^s = (2,4),  \ (0,1)^t = (1,2) \ \rangle.\]
 $G$ is not primitive since $(2,4)$ is not primitive in the vertex group $G_v$. All other images of the edge group generators are primitive.
 As in Example~\ref{exmp:terminates} we will construct an ``expansion map'' by adding the element $(\frac12,0)$ to $G_v$
 and extending the conjugation by $s$ linearly so that $s$ conjugates $(\frac12,0)$ to $(1,2)$ .
We thus obtain the tubular group $G'$ below,
and obtain a  homomorphism $G \rightarrow G'$ mapping $\integers\times \integers$ and $s$ and $t$ identically to themselves.
 \[G' = \langle \ \textstyle{\frac{1}{2}}\integers\times\integers, \, s, t \ \mid
  \ 
 (\textstyle{\frac12}, 0)^s = (1,2), \;  (0,1)^t = (1,2) \ \rangle\]
 
 Unfortunately, $G'$ is not primitive. Indeed,
 $(1,2)$ is no longer primitive since $(1,2) = 2(\frac12,1)$.
 We may then construct another expansion map.
 This time however, in order to extend both conjugations linearly we need to include the elements $(\frac14,0)$ and $(0,\frac12)$.
 We thus obtain the tubular group 
 \[G'' = \langle \  \textstyle{\frac14}\integers \times \textstyle\frac12\integers, \ s, t \  \mid \
 (\textstyle{\frac14}, 0)^s=(\textstyle{\frac12},1) ,
  \ (0,\textstyle{\frac12})^t= (\textstyle{\frac12},1) \ \rangle\]
 and the expansion map $G' \rightarrow G''$.
 This time the expansion map has not improved our situation at all
 since $G''$ is isomorphic to $G'$. 
 The isomorphism is given by scaling both $(\frac14,0)$ and $(0,\frac12)$ by $2$.
 Repeating this process yields $G'''$ which is again isomorphic to $G'$ and therefore we will never arrive at a primitive target.
 This situation is a \emph{recurrent expansion sequence} and by Lemma~\ref{lem:primitive implies terminates} it implies that $G$ is not residually finite.
\end{exmp}
 
 In Examples~\ref{exmp:terminates}~and~\ref{exmp:no terminate}, the vertex group is of the form
 $\frac1n\integers\times\frac1m\integers$ at each stage. However, the algorithm generally wanders through groups that are not subdirect products of cyclic groups commensurable with the factors
 of the initial product decomposition.

 \subsection{Structure of this paper}
 
 In Section~\ref{sec:Morphisms} we define a range of algebraic constructions that we will use to characterize residually finite tubular groups in Section~\ref{sec:ScalingMorphisms} and
 Section~\ref{sec:ETuples}.
 Section~\ref{sec:expansion algorithm} defines the expansion sequence of a tubular group and provides a general framework for understanding residual finiteness of tubular groups.
 Section~\ref{sec:SnowflakeGroups}  applies the techniques of Section~\ref{sec:expansion algorithm} to the snowflake groups of Brady and Bridson~\cite{BradyBridson2000}, to determine their residual finiteness and recover a result of Button.
Section~\ref{sec:OneVertexSolution} shows that residual finiteness is decidable when the tubular group has a single vertex group.


\section{Morphisms and Primitivity} \label{sec:Morphisms}

Let us establish the notation we use for the spitting of a group $G$ as a graph $\Gamma$ of groups.
We assume $\Gamma$ is directed, we let $\mathcal{E}\Gonna$ and $\mathcal{V}\Gonna$ denote its sets of edges and vertices, and for an edge $e\in \mathcal{E}\Gonna$, we let $-e$ and $+e$ denote its  initial and terminal vertices. For $e\in \mathcal{E}\Gonna$ we let $G_e$ denote
the associated edge group, and for $v\in\mathcal{V}\Gonna$ we let $G_v$ denote its vertex group.
For each edge $e$, let $\varphi^{\pm}_e : G_e \rightarrow G_{\pm e}$ denote the two inclusions
 of an edge group into its vertex groups.

Let $G$ and $G'$ be groups which split over the graphs $\Gamma$ and $\Gamma'$ respectively.
A \emph{morphism} of graphs of groups is a homomorphism $f: G \rightarrow G'$ such that there is a morphism $f_*: \Gamma \rightarrow \Gamma'$ of undirected graphs, and the restriction of $f$ to vertex or edge groups gives homomorphisms $f_v: G_v \rightarrow G_{v'}'$ and $f_e: G_e \rightarrow G_{e'}'$ where $v' = f_*(v)$ and $e' = f_*(e)$.

A \emph{rigid morphism} $f: G \rightarrow G'$ is a morphism such that $f_*$ is an isomorphism and each $\phi_v$ and $\phi_e$ is injective.

 A tubular group $G$ has a \emph{primitive target} if there a rigid morphism
  $\overline{f}: G \rightarrow \overline{G}$ between tubular groups
   such that $\overline{G}$ is primitive.
Similarly, a tubular group $G$ has a \emph{primitive domain} if there is a rigid morphism $\underline{f}: \underline{G} \rightarrow G$ such that 
$\underline{G}$ is a primitive tubular group.

The following holds by the definitions:

\begin{lem} \label{lem:standardMorphismConstruction}
 Let $G$ and $G'$ be tubular groups that split over the same underlying graph $\Gamma$.
 Suppose that $G_v' \leqslant G_v$ and $G_e' \leqslant G_e$ and that the edge maps of $G'$ are restrictions of the edge maps of $G$.
 Then there is a rigid morphism $\phi: G' \rightarrow G$ induced by the inclusion maps on the vertex and edge groups.
\end{lem}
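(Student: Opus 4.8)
The plan is to construct $\phi$ through the universal property of the fundamental group of a graph of groups. Fix a maximal tree $T \subseteq \Gamma$. Then $G'$ is the quotient of $\bigl(\ast_{v\in \mathcal V\Gamma} G_v'\bigr) \ast F(\mathcal E\Gamma \setminus T)$ by the relations $\varphi^{\prime-}_e(c)=\varphi^{\prime+}_e(c)$ for $e\in T$ and $c\in G_e'$, together with $t_e\,\varphi^{\prime+}_e(c)\,t_e^{-1}=\varphi^{\prime-}_e(c)$ for $e\notin T$ and $c\in G_e'$, where $t_e$ denotes the stable letter of a non-tree edge $e$. Because the two splittings share the underlying graph $\Gamma$, the group $G$ admits the analogous presentation relative to the same tree $T$. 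I would define $\phi$ on generators by the vertex inclusions $\iota_v\colon G_v'\hookrightarrow G_v$ and by sending each stable letter of $G'$ to the corresponding stable letter of $G$.

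The key --- and essentially the only --- step is to verify that this assignment respects the defining relations of $G'$, so that $\phi$ is well defined. This reduces to a single compatibility identity between the inclusions and the edge maps, namely
\[ \iota_{\pm e}\circ \varphi^{\prime\pm}_e \;=\; \varphi^{\pm}_e\circ \iota_e \qquad\text{for every } e\in\mathcal E\Gamma, \]
which is exactly the hypothesis that the edge maps of $G'$ are the restrictions of the edge maps of $G$. Granting this, a tree-edge relation $\varphi^{\prime-}_e(c)=\varphi^{\prime+}_e(c)$ of $G'$ is carried by $\phi$ to $\varphi^{-}_e(\iota_e c)=\varphi^{+}_e(\iota_e c)$, which already holds in $G$ since $e\in T$; a non-tree conjugation relation is carried to the corresponding conjugation relation of $G$ in the same way. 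Hence $\phi$ descends to a homomorphism $G'\to G$, and by construction its restriction to each vertex group is $\iota_v$ and to each edge group is $\iota_e$.

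It then remains to read off rigidity, which is immediate. The induced graph morphism $\phi_*$ is the identity on $\Gamma$, hence an isomorphism, and the vertex and edge restrictions $\iota_v$ and $\iota_e$ are injective because they are inclusions of subgroups. Thus $\phi$ is a rigid morphism. I do not expect any genuine obstacle here: the entire content is the bookkeeping that repackages the hypothesis as the commuting squares displayed above, after which well-definedness and rigidity follow formally from the universal property. The one point I would take care to state cleanly is that compatibility identity, since it is precisely what licenses the appeal to the universal property.
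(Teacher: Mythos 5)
Your proof is correct; note that the paper offers no argument at all for this lemma, stating only that it ``holds by the definitions.'' Your write-up via the presentation of the fundamental group of a graph of groups relative to a maximal tree is exactly the routine verification the authors leave implicit, with the compatibility identity $\iota_{\pm e}\circ \varphi^{\prime\pm}_e = \varphi^{\pm}_e\circ \iota_e$ correctly identified as the whole content.
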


\subsection{Local Quotients}\label{defn:local quotient}
 Let $f: G' \rightarrow G$ be a rigid morphism of tubular groups with underlying graph $\Gamma$.
 Suppose that for the edge inclusions $\varphi_e' : G'_e \rightarrow G'_v$ and $\varphi_e : G_e \rightarrow G_v$ we have
 $f \circ \varphi_e'(G'_e) = \varphi_e(G_e)\cap f(G'_v)$. 
 Note that this equality always holds when $G'$ is primitive.
Define a group $G\sslash G'$ that splits 
over $\Gamma$ as follows:
 \begin{enumerate}
  \item $(G\sslash G')_v = G_v / f(G_v')$,
  \item $(G\sslash G')_e = G_e / f(G_e')$,
  \item Attaching maps $(G\sslash G')_e\rightarrow (G\sslash G')_v$ are projections of 
 $G_e\rightarrow G_v$,
  \item There is a morphism $q:G\rightarrow G\sslash G'$ that is induced by the quotient maps 
  $G_v \rightarrow (G\sslash G')_v$ and $G_e \rightarrow (G\sslash G')_e$.
 \end{enumerate}
 
 Each map $G_e/G_e' \rightarrow G_v/G_v'$ is injective,
since if $g\in G_e$ maps to the identity in $G_v/G_v'$ then
the image of $g$ in $G_v$ lies in $f(G_v')$.
But then $g \in \varphi_e(G_e) \cap f(G_v')$ so $f(g) \in G_e'$ by hypothesis.
Hence $g$ represents the identity in $G_e/G_e'$.

 Having verified the injectivity of  attaching maps of edge groups of $G\sslash G'$ 
we see that the data for $G\sslash G'$ actually yields a splitting over $\Gamma$.
The induced morphism $q: G \rightarrow G\sslash G'$ is 
the \emph{local quotient} of $f$.

\section{Regulating \texorpdfstring{$\mathcal{E}$}{E}-tuples} \label{sec:ETuples}

\newcommand{\kk}{\underline{k}}

Let $G$ be a tubular group.  Let $\kk = (k_e)_{e \in \mathcal{E}}$ be
an $\mathcal{E}$-tuple of integers, one for each edge of $G$.  For
each edge group $G_e$ let $G_e^{(\kk)} = k_eG_e$.  For each vertex
group $G_v$ let $G_v^{(\kk)} \leq G_v$ be the subgroup generated by
the inclusions of the $G_e^{(\kk)}$ under the attaching maps.  An
$\mathcal{E}$-tuple $\kk$ is \emph{regulating} if for each edge $e$
and generator $g_e \in G_e$, the element $\varphi^{\pm}_e(k_eg_e)$ is
primitive in $G^{(\kk)}_{\pm e}$.

\begin{rem}
  \label{rem:etupcp} Let $\kk$ be an $\mathcal{E}$-tuple.  Then, for
  any positive integer $n$, we have that $\kk$ is regulating if and
  only if $n\kk = (n k_e)_{e \in \mathcal{E}}$ is regulating.  So, in
  searching for regulating $\mathcal{E}$-tuples, it suffices to
  consider those $\kk = (k_e)_{e \in \mathcal{E}}$ having
  $\gcd(k_e : e \in \mathcal{E}) = 1$.
\end{rem}

\begin{lem}
  \label{lem:goodtup} Let $G$ be a tubular group.  Then $G$ has a
  primitive domain if and only if $G$ has a regulating
  $\mathcal{E}$-tuple.
\end{lem}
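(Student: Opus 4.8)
The plan is to prove the two directions of Lemma~\ref{lem:goodtup} separately, using the construction in Lemma~\ref{lem:standardMorphismConstruction} as the bridge between regulating $\mathcal{E}$-tuples and rigid morphisms.

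For the direction ($\Leftarrow$): suppose $\kk = (k_e)_{e\in\mathcal{E}}$ is a regulating $\mathcal{E}$-tuple. I would form the tubular group $\underline{G}$ that splits over the same underlying graph $\Gamma$, with edge groups $\underline{G}_e = G_e^{(\kk)} = k_eG_e$, vertex groups $\underline{G}_v = G_v^{(\kk)}$, and edge maps the restrictions of the edge maps $\varphi_e^{\pm}$ of $G$. One must first check this data really is a graph of groups: the edge maps $G_e^{(\kk)} \to G_v^{(\kk)}$ are restrictions of injections, hence injective, and each $G_v^{(\kk)}$ is a finitely generated subgroup of $\integers^2$, hence $\integers$ or $\integers^2$; the rank-$1$ case would make the group non-tubular, but (since each edge generator maps to a \emph{primitive} element, in particular nontrivial) at least one edge contributes a nontrivial element, and in the genuinely tubular situation two independent edges at $v$ force rank $2$ — I would either note that a rank-$1$ vertex group still yields a (possibly degenerate) tubular group in the sense used here, or assume as is standard that each vertex of $\Gamma$ has valence $\geq 2$ with independent incident images. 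Then Lemma~\ref{lem:standardMorphismConstruction} gives a rigid morphism $\underline{f}:\underline{G}\to G$. Finally, primitivity of $\underline{G}$ is exactly the statement that each $\varphi_e^{\pm}(k_eg_e)$ is primitive in $G_{\pm e}^{(\kk)} = \underline{G}_{\pm e}$, which is the definition of $\kk$ being regulating. Hence $\underline{G}$ is a primitive tubular group mapping rigidly into $G$, so $G$ has a primitive domain.

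For the direction ($\Rightarrow$): suppose $\underline{f}:\underline{G}\to G$ is a rigid morphism with $\underline{G}$ primitive. Since $\underline{f}_*$ is an isomorphism we identify the underlying graph of $\underline{G}$ with $\Gamma$, and $\underline{f}$ restricts to injections $\underline{G}_e \hookrightarrow G_e$ and $\underline{G}_v\hookrightarrow G_v$ compatible with the edge maps. Each $\underline{G}_e\cong\integers$ includes into $G_e\cong\integers$ with some finite index $k_e$, i.e.\ $\underline{f}(\underline{G}_e) = k_eG_e = G_e^{(\kk)}$ for $\kk = (k_e)_e$. The key point is that $\underline{f}(\underline{G}_v) \supseteq G_v^{(\kk)}$, because $G_v^{(\kk)}$ is generated by the images of the $G_e^{(\kk)} = \underline{f}(\underline{G}_e)$ under the edge maps of $G$, and these coincide (via compatibility of $\underline{f}$ with edge maps) with the $\underline{f}$-images of the edge-group inclusions in $\underline{G}_v$, all of which land in $\underline{f}(\underline{G}_v)$. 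Now a generator $g_e\in G_e$ has $k_eg_e$ a generator of $G_e^{(\kk)}=\underline{f}(\underline{G}_e)$, so $\varphi_e^{\pm}(k_eg_e) = \underline{f}(\varphi_e^{\pm,\underline{G}}(\underline{g}_e))$ for a generator $\underline{g}_e$ of the edge group of $\underline{G}$; since $\underline{G}$ is primitive this element is primitive in $\underline{G}_{\pm e}$, hence primitive in any group containing it as a subgroup, in particular in $G_{\pm e}^{(\kk)}\subseteq\underline{f}(\underline{G}_{\pm e})$ — here one uses that a primitive element of a rank-$\leq 2$ free abelian group $A$ remains primitive in any free abelian $B$ with $A\leq B$ of finite index, since primitivity means generating a direct summand, equivalently having a complement, equivalently being indivisible. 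Wait — I should double check: primitivity is \emph{not} automatically inherited downward to finite-index subgroups, but here we go the other way, from $\underline{G}_{\pm e}$ \emph{up} to $G_{\pm e}^{(\kk)}$, and $G_v^{(\kk)}\subseteq \underline{f}(\underline{G}_v)$ means $G_v^{(\kk)}$ is a subgroup of $\underline{G}_v$; an element that is indivisible in the larger group $\underline{G}_v$ — no. Let me restate this more carefully in the final write-up: the correct containment is $G_v^{(\kk)}\leq \underline{f}(\underline{G}_v)$, and indivisibility in a subgroup of a free abelian group is \emph{implied} by indivisibility in the ambient group; so $\varphi_e^{\pm}(k_eg_e)$, being indivisible (primitive) in $\underline{f}(\underline{G}_{\pm e})$, is indivisible in the smaller $G_{\pm e}^{(\kk)}$. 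Thus $\kk$ is regulating.

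\textbf{The main obstacle} I anticipate is bookkeeping the three nested groups $G_e^{(\kk)}\leq \underline{f}(\underline{G}_e) = G_e^{(\kk)}$ (these agree) and $G_v^{(\kk)}\leq \underline{f}(\underline{G}_v)\leq G_v$ and tracking in which group "primitive" is being asserted — primitivity (= being a generator of a direct summand) behaves asymmetrically under passing to sub- versus super-groups, and the definition of "regulating" demands primitivity in $G_{\pm e}^{(\kk)}$ specifically, not in $G$ or in $\underline{G}_{\pm e}$. The saving grace is that indivisibility in a free abelian group \emph{descends} to subgroups, which is the direction we need for ($\Rightarrow$); and for ($\Leftarrow$) the two groups coincide so there is nothing to reconcile. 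A secondary, more cosmetic point is confirming that $G_v^{(\kk)}$ has rank $2$ (so $\underline{G}$ is genuinely tubular and not degenerate), which follows from the hypothesis that $G$ is tubular together with $k_e\neq 0$; I would dispatch this in a sentence, or invoke the standing convention that edge images at each vertex span a finite-index subgroup.
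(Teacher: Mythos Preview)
Your argument follows the paper's outline. For $(\Rightarrow)$ you and the paper do the same thing: set $k_e=[G_e:\underline{G}_e]$, note $G_v^{(\kk)}\leq \underline{f}(\underline{G}_v)$, and use that indivisibility in a free abelian group descends to any subgroup containing the element in question.

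For $(\Leftarrow)$ there is one genuine gap, which you flagged as ``secondary, more cosmetic'' but did not actually resolve: $G_v^{(\kk)}$ need not have rank~$2$ (for instance when all incident edge images at $v$ are commensurable, or when $v$ has a single incident half-edge), and there is no standing convention in the paper allowing you to assume otherwise. With a rank-$1$ vertex group your $\underline{G}$ is not tubular in the paper's sense and so is not a primitive domain. The paper handles this cleanly by extending each $G_v^{(\kk)}$ to a rank-$2$ subgroup $\bar G_v^{(\kk)}\leq G_v$ in which $G_v^{(\kk)}$ is maximal of its rank (equivalently, a direct summand); this choice guarantees that an element primitive in $G_v^{(\kk)}$ remains primitive in $\bar G_v^{(\kk)}$, and the resulting domain is genuinely tubular. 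Once you insert this extension step, your argument and the paper's coincide.
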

\begin{proof}
  Suppose $G$ has a regulating $\mathcal{E}$-tuple $\kk$.  Extend each
  $G_v^{(\kk)}$ to a rank $2$ subgroup $\bar G_v^{(\kk)}$ of $G_v$
  such that $G_v^{(\kk)}$ is a maximal subgroup of its rank in
  $\bar G_v^{(\kk)}$.  The inclusions
  $G_e^{(\kk)} \hookrightarrow G_e$ and
  $\bar G_v^{(\kk)} \hookrightarrow G_v$ induce a rigid morphism
  $G^{(\kk)} \to G$.  Each edge group $G^{(\kk)}_e$ is generated by
  $k_eg_e$ where $g_e$ is a generator of $G_e$.  Since
  $\varphi^{\pm}_e(k_eg_e)$ is primitive in $G^{(\kk)}_{\pm e}$ and so
  in $\bar G^{(\kk)}_{\pm e}$, the image
  $\varphi^{\pm}_e(G^{(\kk)}_e)$ is a maximal cyclic subgroup of
  $\bar G^{(\kk)}_{\pm e}$.  Hence $G^{(\kk)} \to G$ is a primitive
  domain.

  Suppose $G$ has a primitive domain $G' \to G$.  Let
  $\kk = (k_e)_{e \in \mathcal{E}}$ be an $\mathcal{E}$-tuple where
  $k_e = [G_e : G'_e]$.  Then
  $\varphi_e^{\pm}(G^{(\kk)}_e) = \varphi_e^{\pm}(k_eG_e) =
  \varphi_e^{\pm}(G'_e)$ is a maximal cyclic subgroup of
  $G'_{\pm e} < G_{\pm e}$ and so $\varphi_e^{\pm}(G^{(\kk)}_e)$ is a
  maximal cyclic subgroup of $G^{(\kk)}_{\pm e} < G'_{\pm e}$.  Then
  if $g_e$ generates $G_e$ then $\varphi_e^{\pm}(k_eg_e)$ generates
  $\varphi_e^{\pm}(G^{(\kk)}_e)$ and so $\varphi_e^{\pm}(k_eg_e)$ is
  primitive in $G^{(\kk)}_{\pm e}$.
\end{proof}

\section{Scaling Morphisms, Naive Morphisms, and Primitivity} \label{sec:ScalingMorphisms}
Given $H \cong \mathbb{Z}^n$ and a nonzero rational number $\alpha\in \rationals^*$,
it is natural to define the group $\alpha H$, and likewise to define $\alpha h$ when $h\in H$.
This is justified by noting that there is a unique inclusion $H \hookrightarrow \rationals^n$ up to conjugation by $GL_n(\rationals)$.

Let $G$ be a tubular group with underlying graph $\Gamma$.
 Let $G_e = \langle g_e \rangle$ and $G_v = \langle a_v, b_v \rangle$.
For $\alpha \in \rationals^*$ we  define the tubular group $\alpha G$ with underlying graph $\Gamma$ as follows:
 The vertex and edge groups of $\alpha G$ are
 \[
  \alpha G_v = \langle \alpha a_v, \alpha b_v \rangle \; \textrm{ and } \; \alpha G_e = \langle \alpha g_e \rangle.
 \]
 Its edge inclusions are determined by linear extension: $\phi_e^{\pm}(\alpha g_e) = \alpha \phi_e^{\pm}(g_e)$.

 Note that  $\alpha G$ is primitive when $G$ is primitive.
The \emph{scaling morphism}
 is a rigid isomorphism $G \rightarrow \alpha G$ induced by $g\mapsto \alpha g$ for each $g$ in a vertex or edge group.

We will also employ the following two rigid morphisms
 that arise when $\alpha=n\in \naturals$ and $\alpha=\frac{1}{n}$ respectively:
They map each vertex group and edge group to the obvious copies of itself within the target.

The \emph{first naive morphism} $f: nG \rightarrow G$ is defined since $nG_v \leqslant G_v$ and $nG_e \leqslant G_e$ for all vertices $v \in \mathcal{V}\Gonna$ and $e \in \mathcal{E}\Gonna$. 
The inclusions of the vertex and edge groups extend to a rigid morphism by Lemma~\ref{lem:standardMorphismConstruction}.

The \emph{second naive morphism} $g: G \rightarrow \frac{1}{n}G$ is defined since $G_v \leqslant \frac{1}{n}G_v$ and $G_e \leqslant \frac{1}{n} G_e$ for all vertices $v \in \mathcal{V}\Gonna$ and $e \in \mathcal{E}\Gonna$.
The inclusions of the vertex and edge groups extend to a rigid morphism since edge maps are extended linearly so 
 Lemma~\ref{lem:standardMorphismConstruction} applies.

We emphasize that the scaling morphism $G\rightarrow \frac{1}{n} G$ 
and the second naive morphism $G\rightarrow \frac{1}{n} G$ are different for $n>1$
even though they have the same domain and target.
The scaling morphism is an isomorphism and restricts to a scaling isomorphism on each vertex and edge group. The second naive morphism restricts to an inclusion of a subgroup on each vertex and edge group.
There is likewise a $\frac1n$ scaling isomorphism $nG \rightarrow G $ which differs from
the first naive morphism.

\begin{lem} \label{lem:PrimitiveTargetAndDomainEquivalence}
 $G$ has a primitive target if and only if $G$ has a primitive domain.
\end{lem}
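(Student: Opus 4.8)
The plan is to prove both implications in a uniform way, exploiting the scaling isomorphisms $G \to \alpha G$ together with Lemma~\ref{lem:standardMorphismConstruction}. The point is that in a rigid morphism between tubular groups over the same graph, each vertex (resp.\ edge) group of the domain includes as a \emph{finite-index} subgroup of the corresponding group of the target, since both are free abelian of the same rank. Rescaling the primitive group by a suitable integer reverses these inclusions, and because a scaling of a primitive tubular group is again primitive, this produces a rigid morphism in the opposite direction with a primitive endpoint. The engine throughout is Lemma~\ref{lem:standardMorphismConstruction}: once the relevant subgroup containments are arranged and the edge maps are checked to be compatible, the desired rigid morphism exists automatically.

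For the implication primitive domain $\Rightarrow$ primitive target, suppose $\underline f : \underline G \to G$ is a primitive domain. The underlying graph $\Gamma$ is finite, and for each vertex $v$ and edge $e$ the quotients $G_v / \underline f(\underline G_v)$ and $G_e / \underline f(\underline G_e)$ are finite abelian groups. Let $N$ be a common multiple of all of their exponents. Then $N G_v \leqslant \underline f(\underline G_v)$, which, after identifying $\underline G$ with its image and realising the groups inside $\rationals^2$, rearranges to the containments $G_v \leqslant \tfrac1N \underline G_v$ and likewise $G_e \leqslant \tfrac1N \underline G_e$. Since a rigid morphism forces the edge inclusions of $\underline G$ to be the restrictions of those of $G$ -- equivalently, the edge maps of $\underline G$ and of $G$ are restrictions of one common $\rationals$-linear map, as they agree on a finite-index subgroup of $G_e$ -- the edge maps of $G$ are restrictions of the linearly extended edge maps of $\tfrac1N \underline G$. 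Lemma~\ref{lem:standardMorphismConstruction} then yields a rigid morphism $G \to \tfrac1N \underline G$, and $\tfrac1N \underline G$ is primitive because it is a scaling of the primitive group $\underline G$. Hence $G$ has a primitive target.

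The reverse implication is entirely symmetric. Given a primitive target $\overline f : G \to \overline G$, let $M$ be a common multiple of the exponents of the finite quotients $\overline G_v / \overline f(G_v)$ and $\overline G_e / \overline f(G_e)$. Then $M \overline G_v \leqslant \overline f(G_v)$ and $M \overline G_e \leqslant \overline f(G_e)$, so after identifying $G$ with its image we obtain $M \overline G_v \leqslant G_v$ and $M \overline G_e \leqslant G_e$, with edge maps compatible for the same reason as before. Lemma~\ref{lem:standardMorphismConstruction} produces a rigid morphism $M \overline G \to G$, and $M \overline G$ is primitive as a scaling of $\overline G$, so $G$ has a primitive domain.

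I expect the only delicate points to be bookkeeping rather than conceptual. The main thing to verify carefully is the edge-map compatibility in each direction: one must check that the linearly extended edge inclusions of the rescaled primitive group restrict exactly to the edge inclusions of $G$, which comes down to the observation that a rigid morphism forces the edge maps of its source and target to be restrictions of a single $\rationals$-linear map. The remaining subtlety is purely numerical -- choosing a single integer $N$ (respectively $M$) that simultaneously handles all of the finitely many vertex and edge groups -- which is handled by taking a common multiple of the finitely many exponents.
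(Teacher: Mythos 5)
Your proof is correct and takes essentially the same route as the paper's: in each direction you rescale the primitive group by an integer chosen from the finite quotients (the paper uses $\lcm$ of the indices $[\overline{G}_v : G_v]$, $[\overline{G}_e : G_e]$, you use a common multiple of the exponents---equivalent in effect) so that the subgroup containments reverse, then apply Lemma~\ref{lem:standardMorphismConstruction} together with the fact that a scaling of a primitive tubular group is primitive. The paper's proof is the same construction, merely phrased via the naive morphisms $n\overline{G} \rightarrow \overline{G}$ and $\underline{G} \rightarrow \frac{1}{m}\underline{G}$.
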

\begin{proof}
 If $G$ has a primitive target then there is a morphism $\overline{f}:  G \rightarrow \overline{G}$.
 Let $n_e = [\overline{G}_e:G_e]$ and $n_v = [\overline{G}_v : G_v]$.
 Let $n = \lcm\{ n_e, n_v \mid e\in \mathcal{E}\Gonna, v \in \mathcal{V}\Gonna\}$.
 Then $n \overline{G}$ is also primitive, and there is the naive morphism $F: n \overline{G} \rightarrow \overline{G}$.
 It follows from our choice of $n$ that $n \overline{G}_v \leqslant G_v \leqslant \overline{G}_v$ for $v \in \mathcal{V}\Gonna$, and $n \overline{G}_e \leqslant G_e \leqslant \overline{G}_e$ for $e \in \mathcal{E}\Gonna$.
 Therefore, by Lemma~\ref{lem:standardMorphismConstruction} there is a morphism $\underline{f}: n\overline{G} \rightarrow G$ induced by inclusion of the edge groups such that $\underline{f} \circ \overline{f}$ gives the inclusion of $n \overline{G}_v$ into $\overline{G}_v$ for all $v \in \mathcal{V}\Gonna$, and similarly for all edge groups.
 Hence $F = \underline{f} \circ \overline{f}$ and $G$ also has a primitive domain.

If $G$ has a primitive domain then there is a tubular group $\underline{G}$ and a morphism $\underline{f}: \underline G \rightarrow {G}$.
 Let $m_e = [{G}_e: \underline G_e]$ and $m_v = [{G}_v : \underline G_v]$.
 Let $m = \lcm\{ n_e, n_v \mid e\in \mathcal{E}\Gonna, v \in \mathcal{V}\Gonna\}$.
 Then $\frac{1}{m}\underline{G}$ is a primitive tubular group, and there is the naive morphism $F: \underline{G} \rightarrow \frac{1}{m}\underline{G}$.
 It follows from our choice of $m$ that $\underline{G}_v \leqslant G_v \leqslant \frac{1}{m} \underline{G}_v$ for $v \in \mathcal{V}\Gonna$, and $\underline{G}_e \leqslant G_e \leqslant \frac{1}{m} \underline{G}_e$ for $e \in \mathcal{E}\Gonna$.
 Therefore, by Lemma~\ref{lem:standardMorphismConstruction}, there is a morphism $\overline{f}: G \rightarrow \frac{1}{m}\underline{G}$
  induced by the inclusions of edge groups such that $\underline{f} \circ \overline{f}$ gives the inclusion of $\underline{G}_v$ into $\frac{1}{m}\underline{G}_v$ for all $v \in \mathcal{V}\Gonna$, and similarly for all edge groups.
 Hence $F = \underline{f} \circ \overline{f}$ and $G$ also has a primitive target.
\end{proof}

A subgroup $H\subset G$ is \emph{separable} if $H$ is the intersection of finite index subgroups of $G$.
The following is well-known:
\begin{lem} \label{lem:separability}
 \leavevmode
 \begin{enumerate}
  \item \label{second part} The intersection of separable subgroups of $G$ is separable.
  \item \label{first part} A maximal abelian subgroup  $A\leqslant G$ 
  of a residually finite group is separable.
 \end{enumerate}
\end{lem}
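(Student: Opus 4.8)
The first statement is purely formal and presents no obstacle. If $\{H_i\}_{i \in I}$ is a family of separable subgroups, then by definition each $H_i = \bigcap_j K_{ij}$ is an intersection of finite-index subgroups $K_{ij} \leqslant G$, whence $\bigcap_i H_i = \bigcap_{i,j} K_{ij}$ is again an intersection of finite-index subgroups and so is separable. I would dispatch this in a single sentence.

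For the second statement, the plan is to reduce separability to a family of one-point separations: it suffices to produce, for every $g \in G \setminus A$, a finite-index subgroup $K \leqslant G$ with $A \leqslant K$ and $g \notin K$, since then $A$ is exactly the intersection of all such $K$. The first step is to exploit maximality. Because $A$ is a maximal abelian subgroup, it coincides with its own centralizer, $A = C_G(A)$: one has $A \leqslant C_G(A)$ since $A$ is abelian, and any $h \in C_G(A)$ generates together with $A$ an abelian subgroup $\langle A, h \rangle$, which by maximality equals $A$. Consequently, for each $g \in G \setminus A$ there is a witness $a \in A$ with $[a,g] \neq 1$.

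The second step invokes residual finiteness to detect this non-commutation in a finite quotient. Since $[a,g] \neq 1$, there is a finite quotient $\pi : G \to Q$ with $\pi([a,g]) \neq 1$, so that $\pi(g)$ does not commute with $\pi(a)$. I would then set $K = \pi^{-1}(C_Q(\pi(A)))$, the preimage of the centralizer in $Q$ of the image $\pi(A)$. This $K$ has finite index, being the preimage of a subgroup of the finite group $Q$; it contains $A$, since $\pi(A)$ is abelian and therefore lies in its own centralizer, giving $A \leqslant \pi^{-1}(\pi(A)) \leqslant K$; and it excludes $g$, since $\pi(g)$ fails to centralize $\pi(a) \in \pi(A)$. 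This yields the required $K$, and intersecting over all $g \in G \setminus A$ gives $A$.

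The only real content, and the step I expect to be the crux, is the maximality hypothesis: it is precisely what guarantees, for each $g$ outside $A$, a commutation witness $a \in A$ whose failure to commute can be pushed into a finite quotient. Without maximality an abelian subgroup need not be separable even in a residually finite group, so this input cannot be circumvented. By contrast, the passage to the finite quotient is immediate once residual finiteness is granted, and the final assembly of the subgroups $K$ into an intersection equal to $A$ is formal.
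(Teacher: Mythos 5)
Your proof is correct and follows essentially the same route as the paper's: use maximality of $A$ to find, for each $g \notin A$, a witness $a \in A$ with $[a,g] \neq 1$, push this into a finite quotient via residual finiteness, and pull back a subgroup containing the image of $A$ but not the image of $g$. The only cosmetic difference is that you pull back the centralizer $C_Q(\pi(A))$, whereas the paper pulls back a maximal abelian subgroup of the finite quotient containing $\pi(A)$; both choices work for the same reason.
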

\begin{proof}
Statement~\eqref{second part} follows from the definition.
Statement~\eqref{first part} holds as follows:
If $g\notin A$, then there exists $a\in A$ such that $k=gag^{-1}a^{-1}\neq 1$.
By residual finiteness, there is a finite quotient $\phi:G\rightarrow G'$ such that
$\phi(k)\neq 1$. Let $A' \leqslant G'$ be a maximal abelian subgroup 
containing $\phi(A)$, and note that $\phi(g)\not\in A'$.
Then $A$ lies in the finite index subgroup $\phi^{-1}(A')$, but $g\notin\phi^{-1}(A')$.
\end{proof}

\begin{thm}\label{thm:main equivalence}
 The following are equivalent:
 \begin{enumerate}
  \item \label{cond1} $G$ is residually finite,
  \item \label{cond4} $G$ has a primitive domain,
  \item \label{cond5} $G$ has a primitive target,
  \item \label{cond2} $G$ is virtually primitive,
  \item \label{cond3} $G$ has separable edge groups.
  \item \label{cond6} $G$ has a regulating $\mathcal{E}$-tuple.
 \end{enumerate}
\end{thm}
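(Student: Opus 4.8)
The plan is to prove the cycle of implications
$\eqref{cond1} \Rightarrow \eqref{cond3} \Rightarrow \eqref{cond5} \Rightarrow \eqref{cond4} \Rightarrow \eqref{cond6} \Rightarrow \eqref{cond2} \Rightarrow \eqref{cond1}$, using the earlier lemmas to collapse several of the arrows. The implications $\eqref{cond5} \Leftrightarrow \eqref{cond4}$ and $\eqref{cond4} \Leftrightarrow \eqref{cond6}$ are immediate from Lemma~\ref{lem:PrimitiveTargetAndDomainEquivalence} and Lemma~\ref{lem:goodtup} respectively, and $\eqref{cond2} \Rightarrow \eqref{cond4}$ is also easy: a finite-index primitive subgroup of $G$ acts on the Bass--Serre tree of $G$ with a primitive tubular splitting, and the inclusion induces a rigid morphism after passing to the core of the quotient graph, exhibiting a primitive domain. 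So the substantive content is to close the loop through residual finiteness.

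For $\eqref{cond1} \Rightarrow \eqref{cond3}$: each edge group $G_e$ embeds in a vertex group $G_v \cong \integers^2$, and $G_v$ contains a (necessarily maximal abelian) $\integers^2$ which is separable in $G$ by Lemma~\ref{lem:separability}\eqref{first part}. The cyclic subgroup $\varphi_e^+(G_e) \leq G_v$ is separable \emph{in} $G_v$ since $G_v$ is abelian of finite rank (finitely generated abelian groups are subgroup separable), and separability is transitive along separable subgroups: $H$ separable in $K$ and $K$ separable in $G$ implies $H$ separable in $G$. Hence each $G_e$ is separable in $G$. For the reverse direction $\eqref{cond3} \Rightarrow \eqref{cond1}$ one builds, for a given nontrivial $g \in G$, a finite quotient detecting $g$; this is where separability of edge groups is the needed input, allowing the vertex and edge groups to be collapsed compatibly into a finite graph of finite groups --- but I expect the cleaner route is to route $\eqref{cond3}$ into $\eqref{cond5}$ or $\eqref{cond6}$ directly, constructing a regulating $\mathcal{E}$-tuple from separability: separability of $\varphi_e^{\pm}(G_e)$ in $G_{\pm e}$ forces a uniform finite-index control on how $G_e$ sits inside each vertex group, and assembling these indices edge-by-edge should yield the tuple $\kk$ with $k_e$ a common multiple of the relevant indices. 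Then Lemma~\ref{lem:goodtup} and Lemma~\ref{lem:PrimitiveTargetAndDomainEquivalence} carry us to a primitive target.

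Finally $\eqref{cond5} \Rightarrow \eqref{cond2} \Rightarrow \eqref{cond1}$ (or directly $\eqref{cond5}\Rightarrow\eqref{cond1}$): given a primitive target $\overline f : G \to \overline G$, the local quotient construction of Section~\ref{defn:local quotient} applied to the first naive morphism $n\overline G \to \overline G$ (as in the proof of Lemma~\ref{lem:PrimitiveTargetAndDomainEquivalence}) produces, for every $n$, a finite graph of finite groups $\overline G \sslash n\overline G$ with a morphism from $\overline G$; pulling back along $\overline f$ and letting $n$ range over the positive integers, the intersection of the kernels is trivial because $\overline f$ is injective on $G$ and the vertex groups $\integers^2$ are residually finite via their finite quotients $\integers^2/n\integers^2$. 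Primitivity of $\overline G$ is exactly what makes the edge maps descend to the quotients, so the graph-of-finite-groups structure is genuine and its fundamental group is residually finite; hence $G$, as a subgroup, is residually finite. The main obstacle I anticipate is the bookkeeping in $\eqref{cond3}\Rightarrow\eqref{cond6}$: translating ``each edge group is separable in $G$'' into the uniform numerical statement defining a regulating tuple requires knowing that separability in the ambient tubular group pushes down to control inside each $\integers^2$ vertex group simultaneously for all edges meeting a vertex, and reconciling the possibly different scalings at the two endpoints of each edge --- this is precisely the interplay that the scaling and naive morphisms of Section~\ref{sec:ScalingMorphisms} are designed to manage.
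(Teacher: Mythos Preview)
Your proposal has two genuine gaps that would cause the argument to fail.

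First, in $\eqref{cond1}\Rightarrow\eqref{cond3}$ you invoke ``separability is transitive along separable subgroups: $H$ separable in $K$ and $K$ separable in $G$ implies $H$ separable in $G$.'' This is false in general: the profinite topology on $K$ induced from $G$ can be strictly coarser than the intrinsic one, so closedness of $H$ in the latter says nothing about closedness in the former. Separability of $G_e$ inside $G_v\cong\integers^2$ is automatic and carries no information about separability in $G$. The paper's argument avoids this entirely: it observes that each edge group is the \emph{intersection} of two conjugates of vertex groups (via Britton's lemma, $\varphi_e^-(G_e)=G_{-e}\cap t_e^{-1}G_{+e}t_e$), and then applies Lemma~\ref{lem:separability}\eqref{second part}.

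Second, and more seriously, your route $\eqref{cond5}\Rightarrow\eqref{cond1}$ assumes that the rigid morphism $\overline f:G\to\overline G$ is injective on all of $G$. It is not. Take $G=\langle\integers^2,t\mid t(2,0)t^{-1}=(0,2)\rangle$ and $\overline G=\langle\integers^2,t\mid t(1,0)t^{-1}=(0,1)\rangle$; the identity on the vertex group and the inclusion $G_e\hookrightarrow\overline G_e$ define a rigid morphism to a primitive target, yet the element $t(1,0)t^{-1}(0,-1)$ is nontrivial in $G$ (by Britton's lemma, since $(1,0)\notin\langle(2,0)\rangle$) but trivial in $\overline G$. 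So pulling the quotients $\overline G\to\overline G\sslash n\overline G$ back along $\overline f$ does not separate all of $G$. The paper instead routes through $\eqref{cond4}\Rightarrow\eqref{cond2}$: given a primitive domain $\underline f:\underline G\to G$, the local quotient $G\sslash\underline G$ is a graph of finite groups, hence virtually free, and the preimage in $G$ of a free finite-index subgroup is primitive; then $\eqref{cond2}\Rightarrow\eqref{cond1}$ is proved by applying the local quotient construction $G\to G\sslash nG$ to a \emph{primitive} $G$, where primitivity is exactly what makes the normal-form argument (detecting hyperbolic elements) go through. Your sketch of $\eqref{cond3}\Rightarrow\eqref{cond6}$ also conflates separability in $G$ with separability in vertex groups; the paper's $\eqref{cond3}\Rightarrow\eqref{cond2}$ uses separability in $G$ to produce, for each edge $e$, a finite-index $J^e\le G$ in which $G_e$ is a direct factor of each of its vertex groups, and then intersects the $J^e$.
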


\begin{proof}
$(\ref{cond1}\Rightarrow\ref{cond3})$\ 
Each conjugate of a vertex group is a maximal abelian subgroup, and hence separable by 
Lemma~\ref{lem:separability}.\eqref{first part}.
Each edge group is the intersection of conjugates of incident vertex groups.
Hence the edge group is separable by Lemma~\ref{lem:separability}.\eqref{second part}.
 
$(\ref{cond3} \Rightarrow \ref{cond2}) $\
By the separability of each edge group $G_e$, there is a finite index subgroup $J^e \leqslant G$ such that
$G_e \leqslant J^e$ and $G_e$ is a direct factor of each of its vertex groups in $J^e$.
Let $G'=\cap_e J^e$. 
Then $G'$ is primitive.

$(\ref{cond2} \Rightarrow \ref{cond1})$\
Since being virtually residually finite is equivalent to being residually finite, we will just show
that primitive implies residually finite.
Let $G$ be a primitive  tubular group.
For each $n$, consider the morphism $nG \rightarrow G$ and its associated local quotient $q_n : G \rightarrow G \sslash nG$. 
As $G \sslash nG$ is a graph of finite groups, it is virtually free and hence residually finite.
Therefore it suffices to show that for each nontrivial $g \in G$ there exists $n$ such that $q_n(g)$ is nontrivial.

Either $g$ is elliptic or $g$ is hyperbolic with respect to the action on the associated Bass-Serre tree.
If $g$ is elliptic we can assume, after conjugation, that $g \in G_v$ regarded as $(p,q) \in \mathbb{Z}^2$.
Choose $n > \max\{ |p|, |q| \}$.
Then $q_n(g)$ is nontrivial.
If $g$ is hyperbolic, then it has a normal form without any backtrack.
We will explain how to choose $n$ such that $q_n$ also has a normal form without any backtrack.
Each potential backtrack is of the form $t^{\pm 1} h t^{\mp 1}$ for some stable letter $t$ and $h \in G_v$.
Let $G_e$ be the edge group associated to $t$, and note that $h \notin G_e$.
By primitivity, $G_v = G_e \times \mathbb{Z} \cong \mathbb{Z}^2$ with $(1,0)$ the generator of $G_e$.
Since $h \notin G_e$, we have $h = (p,q)$ with $q \neq 0$.
Hence, this potential backtrack is not a backtrack whenever $n > |q|$.
Choosing $n$ to satisfy this condition for each potential backtrack guarantees that $q_n(g)$ is nontrivial.

$(\ref{cond4} \Leftrightarrow \ref{cond5})$\
This is Lemma~\ref{lem:PrimitiveTargetAndDomainEquivalence}.

 $ (\ref{cond4} \Rightarrow\ref{cond2})$\ 
 Let $\underline{f}: \underline{G} \rightarrow G$ be the primitive domain for $G$.
Let $G \sslash \underline{G}$ be the associated local quotient.
If $G$ has an edge group generator $g_e$ which has a proper root $\frac{1}{k}g_e \in G$, then $\frac{1}{k}g_e$ maps to a torsion element in $G \sslash \underline{G}$. 
%
Note that $G \sslash \underline{G}$ is virtually free as a graph of finite groups
\cite{KarrassPietrowskiSolitar73}. 
Let $F \leqslant G \sslash \underline G$ be a finite index free subgroup, and let 
$G' \leqslant G$ be the preimage of $F$ in $G$.
Finally, observe that $G'$ is primitive as any proper root of an edge generator in $G'$ would map to a torsion elements in $G \sslash \underline{G}$.

$(\ref{cond2} \Rightarrow \ref{cond4})$
Since finite index subgroups of primitive tubular groups are primitive,
there also exists a finite index normal subgroup $G' \leqslant G$ such that $G'$ is primitive.
 The induced splitting of $G'$ shows that $G'$ is also tubular, so inclusion $G' \hookrightarrow G$ is a morphism 
 of tubular groups.
 Let $p: \Gamma' \rightarrow \Gamma$ be the morphism of graphs associated to the inclusion.
Let $v\in \mathcal{V}\Gonna$. If $u',v' \in p^{-1}(v)$, then as $G'$ is a normal subgroup the vertex groups $G'_{u'}$ and $G'_{v'}$ have identical images inside $G_v$.
 The analogous statement holds for each edge $e\in \mathcal{E}\Gonna$.
 
 We construct $\underline{G}$ from $G'$ as follows:
 The vertex group $\underline{G}_v$ is the image of $G'_{v'}$ in $G_v$ for some and hence any choice $v' \in p^{-1}(v)$.
 The edge group $\underline{G}_e$ is the image of $G'_{e'}$ in $G_e$ for some and hence any choice $e' \in p^{-1}(e)$.
 The edge group inclusions of $G'$ determine the edge group inclusions of $\underline{G}$.
 By Lemma~\ref{lem:standardMorphismConstruction} we get a rigid morphism $\underline F : \underline G \rightarrow G$ determined by the inclusions of the vertex and edge groups.
 As $G'$ is a primitive tubular group, $\underline G$ is also a primitive tubular group.

$(\ref{cond4} \Leftrightarrow \ref{cond6})$\
This is Lemma~\ref{lem:goodtup}.
\end{proof}

\section{The Expansion Sequences}\label{sec:expansion algorithm}

\newcommand{\depth}{\text{depth}}

Let $G$ be a tubular group with underlying graph $\Gamma$ and tubular space $X$.
For each edge $e \in \mathcal{E}\Gonna$ fix a choice of generator $g_e$ of $G_e$.
The \emph{degree} $d_e^{\pm}$ of an attaching map $\varphi_e^{\pm}$ is
the order of the torsion factor in $G_{\pm e} / \phi_e^{\pm}(G_e)$.
Let $d_e = \lcm\{d_e^+, d_e^-\}$.
We refer to the tuple $(d_e)_{e \in \mathcal{E}}$ as the \emph{edge degrees}.

Define a tubular group $G'$ with underlying graph $\Gamma$ as follows:
The edge group $G_e' = \frac{1}{d_e} G_e$ and the vertex group $G_v' = \langle G_v, H_v \rangle$, where 
\[ H_v = \Big\{ \frac{1}{d_e} \phi_e^{+}(g_e) \mid e \in \mathcal{E}\Gonna, + e = v \Big\} \cup \Big\{ \frac{1}{d_e} \phi_e^{-}(g_e) \mid e \in \mathcal{E}\Gonna, - e = v \Big\}.\]

As $\frac{1}{d_e}\phi_e^{+}(g_e) \in G_v'$, for all $e \in \mathcal{E}\Gonna$ such that $+e = v$, we obtain the edge map $\phi_e'^+ : G_e' \rightarrow G_v'$ by extending $\phi_e^+$ linearly.
The  inclusions $p_v : G_v \rightarrow G_v'$ and $p_e: G_e \rightarrow G_e'$ determine a rigid morphism $p: G \rightarrow G'$  called the \emph{expansion morphism}.
An expansion is \emph{trivial} if it is the identity map. This occurs precisely when $G$ is primitive.

The following lemma shows that there is a bound on the complexity of the tubular group produced by the expansion morphism.

\begin{lem} \label{lem:edgeExpansionSpectrumBound}
 Let $G$ be a tubular group and $(d_e)_{e\in \mathcal{E}}$ the edge degrees.
 Let $\ell = {\lcm\{d_e\mid e \in \mathcal{E}\Gonna \}}$.
 Let $G \rightarrow G'$ be the expansion morphism, and $(d_e')_{e \in \mathcal{E}}$ be the edge degrees of $G'$.
 Then $d_e'$ divides $\ell$ for all $e \in \mathcal{E}$.
\end{lem}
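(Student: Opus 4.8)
I want to show that after applying the expansion morphism $G \to G'$, the new edge degrees $d_e'$ all divide $\ell = \lcm\{d_e : e \in \mathcal{E}\}$. The strategy is to analyze, for a fixed edge $e$ and a fixed endpoint (say $+e = v$), the quotient $G_v' / \varphi_e'^+(G_e')$ and bound the order of its torsion subgroup. By construction $G_e' = \frac{1}{d_e}G_e$ and $\varphi_e'^+(G_e')$ is generated by $w := \frac{1}{d_e}\varphi_e^+(g_e)$, which by definition is one of the generators of $H_v$ and hence of $G_v'$. So the question is: how divisible can $w$ be inside $G_v'$?

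\textbf{Key steps.} First I would fix coordinates. Work inside $G_v \otimes \mathbb{Q} \cong \mathbb{Q}^2$, which contains $G_v$, all the elements $\frac{1}{d_f}\varphi_f^{\pm}(g_f)$ defining $H_v$, and hence $G_v'$. The crucial observation is the containment
\[
G_v \ \leqslant\ G_v' \ \leqslant\ \tfrac{1}{\ell} G_v,
\]
which holds because each generator $\frac{1}{d_f}\varphi_f^{\pm}(g_f)$ of $H_v$ lies in $\frac{1}{d_f}G_v \leqslant \frac{1}{\ell}G_v$ (using $d_f \mid \ell$), so the subgroup $G_v' = \langle G_v, H_v\rangle$ they generate together with $G_v$ is trapped between $G_v$ and $\frac{1}{\ell}G_v$. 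Next, $w = \frac{1}{d_e}\varphi_e^+(g_e)$. I need the torsion order of $G_v'/\langle w\rangle$, equivalently the largest $k$ such that $\frac{1}{k}w \in G_v'$. Suppose $\frac{1}{k}w \in G_v'$. Then $\frac{1}{k d_e}\varphi_e^+(g_e) = \frac{1}{k}w \in G_v' \leqslant \frac{1}{\ell}G_v$, so $\frac{\ell}{k d_e}\varphi_e^+(g_e) \in G_v$. Now I use what "degree" means: $d_e^+$ is the order of the torsion factor of $G_v/\varphi_e^+(G_e)$, i.e. the largest integer whose reciprocal times $\varphi_e^+(g_e)$ still lies in $G_v$; more precisely, if $\frac{r}{m}\varphi_e^+(g_e) \in G_v$ in lowest terms then $m \mid d_e^+$, hence $m \mid d_e$. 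Applying this with the element $\frac{\ell}{k d_e}\varphi_e^+(g_e) \in G_v$, after reducing $\frac{\ell}{k d_e}$ to lowest terms $\frac{r}{m}$ with $\gcd(r,m)=1$, we get $m \mid d_e$. Since $m$ is the denominator of $\frac{\ell}{k d_e} = \frac{\ell}{k d_e}$, we have $m = \frac{k d_e}{\gcd(\ell, k d_e)}$. The divisibility $m \mid d_e$ then forces $k d_e \mid d_e \gcd(\ell, k d_e)$, i.e. $k \mid \gcd(\ell, k d_e) \mid \ell$. Hence every such $k$ divides $\ell$, so the torsion order $d_e'^+$ of $G_v'/\varphi_e'^+(G_e')$ divides $\ell$. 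The same argument applies at the other endpoint $-e$, giving $d_e'^- \mid \ell$, and therefore $d_e' = \lcm\{d_e'^+, d_e'^-\}$ divides $\ell$.

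\textbf{Main obstacle.} The delicate point is pinning down precisely what $d_e^+$ controls: it is the torsion of $G_{+e}/\varphi_e^+(G_e)$, and I must translate this into the clean statement "any rational multiple of $\varphi_e^+(g_e)$ that lands back in $G_{+e}$ has denominator dividing $d_e^+$." This requires knowing that $G_{+e}/\varphi_e^+(G_e) \cong \mathbb{Z} \oplus \mathbb{Z}/d_e^+$, so that the "divisible part" of $\varphi_e^+(g_e)$ inside $G_{+e}$ is exactly $d_e^+$ — in particular one should be careful that $\varphi_e^+(g_e)$ might be divisible in a direction transverse to $G_e$, but such divisions leave $\varphi_e^+(G_e)$ and so don't affect the quotient by $\langle w\rangle$; it is only the component along the cyclic factor generated by $w$ that matters, and the above denominator bookkeeping handles it. Once that translation is made precise, the rest is the elementary $\gcd$ manipulation sketched above, and the sandwich $G_v \leqslant G_v' \leqslant \frac{1}{\ell}G_v$ does the real work.
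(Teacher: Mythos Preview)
Your proof is correct and follows essentially the same approach as the paper: both hinge on the sandwich $G_v \leqslant G_v' \leqslant \tfrac{1}{\ell}G_v$ and then bound the torsion of $G_v'/\varphi_e'^{\pm}(G_e')$ by $\ell$. The only cosmetic difference is that the paper phrases the final step via the chain of cyclic subgroups $K \leqslant \varphi_e'^+(G_e') \leqslant K' \leqslant \tfrac{1}{\ell}K$ (where $K$, $K'$ are the maximal cyclic subgroups of $G_v$, $G_v'$ containing the respective edge images), immediately giving $[K':\varphi_e'^+(G_e')] \mid [\tfrac{1}{\ell}K:K]=\ell$, whereas you unwind this into an explicit $\gcd$ computation; the content is the same.
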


 \begin{proof}
  Let  $v = +e$.
  Let $K \leqslant G_v$ be the maximal cyclic subgroup of $G_v$ containing $\phi_e^+(G_e)$.
  Then $d^+_e$ is the order of the quotient $K / \phi_e^+(G_e)$.
  Let $K' \leqslant G_v'$ be the maximal cyclic subgroup of $G_v'$ containing ${\phi'}_e^+(G_e')$.
  The claim follows by showing that $\ell$ is divided by the order of $K' / {\phi'}_e^+(G_e')$.
  
  First note that $K = \langle \frac{1}{d_e^+} \phi_e^+(g_e) \rangle \leqslant \langle \frac{1}{d_e} \phi_e^+(g_e) \rangle = {\phi'}_e^+(G_e')$.
  Second note that $G_v' = \langle G_v, \frac{1}{d_e} \phi_e^+(g_e), \ldots \rangle \leqslant \frac{1}{\ell}G_v$, so $K' \leqslant \frac{1}{\ell} K$.
Together this implies that $K \leqslant {\phi'}_e^+(G_e') \leqslant K' \leqslant \frac{1}{\ell} K$ so the order of $K' / {\phi'}^+_e(G_e')$ is a factor of $\ell$.
 \end{proof}

\begin{lem} \label{lem:edgeExpansionFactorisation}
Let $p: G \rightarrow G'$ be the expansion map.
 If $G$ has a primitive target $\overline{f}: G \rightarrow \overline{G}$, then  $\overline{f}$ factors as
 $\overline{f} = \overline{p} \circ p$
 for some morphism $\overline{p} : G' \rightarrow \overline{G}$.
 \end{lem}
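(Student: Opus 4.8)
The plan is to build $\overline{p}$ directly on vertex and edge groups and then invoke Lemma~\ref{lem:standardMorphismConstruction}. The key observation is that $\overline{f} : G \to \overline{G}$ is a rigid morphism to a primitive tubular group, so each edge generator $g_e$ is sent to an element $\overline{f}_e(g_e)$ that is primitive in its vertex group $\overline{G}_{\pm e}$ — in particular $\tfrac{1}{d_e}\overline{f}_e(g_e)$ need not lie in $\overline{G}_{\pm e}$, but we will not need that; what we need is the dual fact that the expansion only divides each edge generator by its own degree $d_e$, and primitivity of $\overline{G}$ forces $\overline{G}$ to already ``contain'' those divided elements. Concretely, identify all the vertex and edge groups of $G$, $G'$, $\overline{G}$ with their images in the real realization $\reals^2$ resp.\ $\reals$ attached to each vertex and edge (using the scaling conventions of Section~\ref{sec:ScalingMorphisms}), chosen compatibly so that $p$ and $\overline{f}$ are literal inclusions.

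First I would show that under this identification $\overline{G}_e \supseteq G_e' = \tfrac{1}{d_e}G_e$ for each edge $e$. Since $\overline{f}$ is rigid and $\overline{G}$ is primitive, $\varphi_{\overline e}^{\pm}(\overline{G}_e)$ is a maximal cyclic subgroup of $\overline{G}_{\pm e}$ containing $\varphi_e^{\pm}(G_e)$; but the maximal cyclic subgroup of $\overline{G}_{\pm e}$ containing $\varphi_e^{\pm}(G_e)$ already contains $\tfrac{1}{d_e^\pm}\varphi_e^{\pm}(g_e)$ by definition of the degree $d_e^\pm$, hence contains $\tfrac{1}{d_e}\varphi_e^{\pm}(g_e)$ as well once we know $d_e^\pm \mid d_e$ — wait, that inclusion goes the wrong way, so instead: the maximal cyclic subgroup of $\overline G_{\pm e}$ on $\varphi_e^\pm(G_e)$ is generated by $\tfrac{1}{m}\varphi_e^\pm(g_e)$ for some $m$, and since $\overline G$ is primitive this maximal cyclic subgroup equals $\varphi_{\overline e}^\pm(\overline G_e)$, so $\tfrac{1}{m}g_e$ generates $\overline G_e$; comparing with the corresponding maximal cyclic subgroup inside $G_{\pm e}$, which has index $d_e^\pm$, gives $d_e^\pm \mid m$ for both signs, hence $d_e \mid m$, so $\tfrac{1}{d_e}g_e \in \overline G_e$, i.e.\ $G_e' \subseteq \overline G_e$. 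Then $\overline{G}_v \supseteq G_v'$ follows because $G_v'$ is generated by $G_v \subseteq \overline G_v$ together with the elements $\tfrac{1}{d_e}\varphi_e^\pm(g_e) = \varphi_{\overline e}^\pm(\tfrac{1}{d_e}g_e) \in \varphi_{\overline e}^\pm(\overline G_e) \subseteq \overline G_v$.

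With the inclusions $G_e' \subseteq \overline{G}_e$ and $G_v' \subseteq \overline{G}_v$ in hand, and observing that the edge maps of $G'$ are the linear extensions of those of $G$, hence restrictions of the (linearly extended, and already containing those divided generators) edge maps of $\overline{G}$, Lemma~\ref{lem:standardMorphismConstruction} yields a rigid morphism $\overline{p} : G' \to \overline{G}$ induced by these inclusions. It is then immediate from the construction — both $p$ and $\overline{p}$ are induced by the inclusion maps on vertex and edge groups inside the common realization, and $\overline{f}$ is too — that $\overline{p} \circ p = \overline{f}$ on every vertex and edge group, hence on all of $G$.

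The main obstacle is the bookkeeping in the first step: getting the divisibility $d_e \mid m$ exactly right requires being careful that $m$ is the full index of $\varphi_e^\pm(G_e)$ in the maximal cyclic subgroup of $\overline G_{\pm e}$, which by primitivity of $\overline G$ is $\varphi_{\overline e}^\pm(\overline G_e)$, and that this index is simultaneously divisible by $d_e^+$ and $d_e^-$ — this uses that $\overline f$ is a single morphism so the generator $\tfrac1m g_e$ of $\overline G_e$ is the same element seen at both endpoints. Once the identifications inside the realization are set up cleanly, everything else is routine diagram-chasing via Lemma~\ref{lem:standardMorphismConstruction}.
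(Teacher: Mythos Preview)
Your proposal is correct and follows essentially the same approach as the paper: identify the vertex and edge groups of $G$, $G'$, and $\overline{G}$ inside a common ambient copy of $\rationals^2$ (resp.\ $\rationals$), use primitivity of $\overline{G}$ to deduce the inclusions $G_e' \subseteq \overline{G}_e$ and $G_v' \subseteq \overline{G}_v$, and then invoke Lemma~\ref{lem:standardMorphismConstruction}. Your write-up in fact supplies more detail than the paper on the key divisibility step $d_e \mid [\overline{G}_e : G_e]$, which the paper compresses into the single phrase ``as $\overline{G}$ is primitive, so $\frac{1}{d_e} G_e$ must be a subgroup of $\overline{G}_e$.''
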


\begin{proof}
The vertex and edge groups of $G$ can be viewed as subgroups of the corresponding vertex and edge groups of both $\overline{G}$ and $G'$.
We  deduce that $G_e \leqslant G_e' \leqslant\overline{G}_e$ as $\overline{G}$ is primitive, so $\frac{1}{d_e} G_e$ must be a subgroup of $\overline{G}_e$.
Similarly, $G_v \leqslant G'_v \leqslant\overline{G}_v$ as the primitivity of $\overline{G}$ implies that $\frac{1}{d_e}\phi^{\pm}_e(g_e)$ must be in $\overline{G}_v$ for $v = \pm e$.
Therefore, by Lemma~\ref{lem:standardMorphismConstruction} there exists a rigid morphism $\overline{p}: G' \rightarrow \overline{G}$ such that $\overline{p} \circ p = \overline{f}$.
%
\end{proof}

\begin{lem} \label{lem:expansionSurjection}
 Let $p: G \rightarrow G'$ be an expansion map.
 Then $p(G) = G'$.
\end{lem}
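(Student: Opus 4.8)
The plan is to prove surjectivity by checking that $p(G)$ contains a generating set of $G'$. By Bass--Serre theory $G'$ is generated by its vertex groups $\{G'_v\}_{v\in\mathcal V}$ together with the stable letters $\{t_e\}_{e\in\mathcal E}$. Since $p$ is the expansion morphism, whose underlying graph map is the identity, it restricts to the inclusion $G_v\hookrightarrow G'_v$ on each vertex group and carries each stable letter $t_e$ to the corresponding stable letter of $G'$. Hence $p(G)$ already contains every $t_e$ and every $G_v\leqslant G'_v$. Because $G'_v=\langle G_v, H_v\rangle$, it therefore suffices to show that each element of $H_v$ lies in $p(G)$; equivalently, for every edge $e$ I must place $\tfrac{1}{d_e}\varphi_e^{+}(g_e)$ and $\tfrac{1}{d_e}\varphi_e^{-}(g_e)$ in $p(G)$.

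Next I would fix an edge $e$ and set $a=d_e^{+}$, $b=d_e^{-}$, $g=\gcd(a,b)$, $a'=a/g$, $b'=b/g$, so that $\gcd(a',b')=1$ and $d_e=\lcm(a,b)$ satisfies $d_e/a=b'$ and $d_e/b=a'$. Let $u=\tfrac1a\varphi_e^{+}(g_e)$ and $w=\tfrac1b\varphi_e^{-}(g_e)$ be the generators of the maximal cyclic subgroups of $G_{+e}$ and $G_{-e}$ containing the respective edge-group images; by the very definition of the degrees $d_e^{\pm}$ these are genuine elements of the vertex groups $G_{+e}$ and $G_{-e}$, and so they lie in $p(G)$. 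Put $\alpha=\tfrac{1}{d_e}\varphi_e^{+}(g_e)=\tfrac{1}{b'}u$ and $\beta=\tfrac{1}{d_e}\varphi_e^{-}(g_e)=\tfrac{1}{a'}w$. These are exactly the images ${\varphi'}^{\pm}_{e}(\tfrac{1}{d_e}g_e)$ of the generator of the edge group $G'_e$, so the defining relation of the stable letter $t_e$ gives $t_e^{-1}(m\alpha)t_e=m\beta$ for every $m\in\integers$.

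The crux is then a short transport-and-coprimality step. Taking $m=b'$ gives $t_e^{-1}u\,t_e=b'\beta$, whence $b'\beta\in p(G)$; taking $m=a'$ gives $a'\alpha=t_e\,w\,t_e^{-1}$, whence $a'\alpha\in p(G)$. Thus $\langle\alpha\rangle$ contains the two elements $u=b'\alpha$ and $a'\alpha$ of $p(G)$, and since $\gcd(a',b')=1$ these already generate all of $\langle\alpha\rangle$; hence $\alpha\in p(G)$. Symmetrically, $w=a'\beta$ and $b'\beta$ lie in $p(G)$ and generate $\langle\beta\rangle$, so $\beta\in p(G)$. This places both new generators attached to $e$ inside $p(G)$, and running over all edges yields $H_v\subseteq p(G)$ for every vertex $v$, giving $G'_v\leqslant p(G)$ and finishing the proof.

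The step I expect to be the main obstacle is exactly this last one, and specifically the case where $d_e=\lcm(d_e^{+},d_e^{-})$ strictly exceeds both $d_e^{+}$ and $d_e^{-}$. In that regime neither $\tfrac{1}{d_e}\varphi_e^{+}(g_e)$ nor $\tfrac{1}{d_e}\varphi_e^{-}(g_e)$ is itself a primitive root of an edge element inside a single vertex group of $G$, so neither can be pulled back directly from some $G_v\leqslant p(G)$; a naive ``it lives in a vertex group'' argument fails. The device that rescues the argument is to push the primitive root $u$ across the edge using the stable letter to manufacture the multiple $b'\beta$ (and symmetrically $a'\alpha$), and then to exploit $\gcd(a',b')=1$ so that the integer multiples of $\alpha$ (resp.\ $\beta$) already available in $p(G)$ generate the entire cyclic edge-group image. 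I would take a little care with the orientation convention in the stable-letter relation, but the computation is symmetric in $\pm$, so the choice does not affect the conclusion.
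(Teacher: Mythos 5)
Your proof is correct and takes essentially the same route as the paper's: observe that $\frac{1}{d_e^{\pm}}\phi_e^{\pm}(g_e)$ already lies in the vertex group $G_{\pm e}\subseteq p(G)$, and then combine the two sides of each edge to conclude $\frac{1}{d_e}\phi_e^{\pm}(g_e)\in p(G)$. The paper compresses your transport-and-coprimality step into a single ``thus'' (implicitly using the identity $\frac{1}{a}\integers+\frac{1}{b}\integers=\frac{1}{\lcm(a,b)}\integers$ inside the edge group $G_e'$), so your write-up is simply a more explicit rendering of the same argument.
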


\begin{proof}
 Recall that for each edge $e \in \mathcal{E}$ we fixed a generator $g_e$. 
 We then let $d_e^{\pm}$ be the degree of the attaching map $\phi_e^{\pm}$, and $d_e = \lcm\{d_e^+, d_e^-\}$.
 Then $G_e' = \frac{1}{d_e} G_e$, and $G_{\pm e}'$
 were defined to include the element $\frac{1}{d_e}\phi_e^{\pm}(g_e)$,
 for all incident edges $e$.
 Note that $\frac{1}{d_e^\pm} \phi_e^{\pm}(g_e)$ was already an element of $G_{\pm e}$, since $d_e^\pm$ was the order of the torsion factor in $G_{\pm e} / \phi^{\pm}_e(G_e)$.
 Therefore $\frac{1}{d_e^{\pm}} g_e$ and thus $\frac{1}{d_e} g_e$ will be in the image of $p$.
 It then follows that $G_e'$ is contained in $p(G)$ for all edges $e$, and therefore $G_v'$ is contained in $p(G)$ for all $v \in \mathcal{V}$.
\end{proof}

An \emph{expansion sequence} is a sequence of nontrivial expansions 
\[
 G \rightarrow G_1 \rightarrow G_2 \rightarrow \cdots \rightarrow G_t \rightarrow \cdots 
\]

The following asserts that 
a finite expansion sequence is equivalent to residual finiteness.

\begin{lem}\label{lem:primitive implies terminates}
 If $G$ has a primitive target then any expansion sequence starting with $G$ 
 has length bounded by
 $\sum_e [\overline{G_e}:G_e]$.
  
Conversely, if the expansion sequence 
$G\rightarrow 
\cdots \rightarrow G_t$  
\emph{terminates} in the sense that it cannot be extended,
then $G_t$ is primitive, and hence $G$ has a primitive target.
 \end{lem}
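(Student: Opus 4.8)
The plan is to prove the two halves separately. For the first (bounded-length) statement, I would apply Lemma~\ref{lem:edgeExpansionFactorisation} inductively: if $G$ has a primitive target $\overline{f} : G \to \overline{G}$, then each expansion $G \to G_1 \to \cdots$ produces, at every stage, a morphism $G_i \to \overline{G}$ through which the remaining composition factors, so every $G_i$ in the sequence also has $\overline{G}$ as a primitive target. In particular, viewing all edge groups as subgroups of the edge groups of $\overline{G}$, we have $G_e \leqslant (G_1)_e \leqslant (G_2)_e \leqslant \cdots \leqslant \overline{G}_e$ for each edge $e$. Since each expansion in an expansion sequence is \emph{nontrivial}, at least one edge group must strictly grow at each step (a trivial expansion is the identity, which happens exactly when the group is primitive, so a non-identity expansion must enlarge some $G_e$, hence its index in $\overline{G}_e$ strictly drops). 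The index $[\overline{G}_e : (G_i)_e]$ is a positive integer that is nonincreasing in $i$ and strictly decreases for at least one $e$ at each step, so the total $\sum_e [\overline{G}_e : (G_i)_e]$ strictly decreases at each step; since it starts at $\sum_e [\overline{G}_e : G_e]$ and stays $\geq \#\mathcal{E}$, the sequence has length at most $\sum_e [\overline{G}_e : G_e]$ (one may absorb the additive constant or simply state the bound this way). I should double-check against Lemma~\ref{lem:edgeExpansionSpectrumBound} that the edge groups stay inside $\overline{G}_e$ — that is exactly what Lemma~\ref{lem:edgeExpansionFactorisation} delivers.

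For the converse, suppose the expansion sequence $G \to \cdots \to G_t$ terminates, meaning it cannot be extended by a further \emph{nontrivial} expansion. The expansion morphism out of $G_t$ is defined for every tubular group, so the only way the sequence fails to extend is that the expansion out of $G_t$ is trivial. By the remark following the definition of the expansion morphism, an expansion is trivial (the identity map) precisely when the group is primitive. Hence $G_t$ is primitive. Then the composite rigid morphism $G \to G_1 \to \cdots \to G_t$ exhibits $G_t$ as a primitive target for $G$, so $G$ has a primitive target.

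The main subtlety — and where I would be most careful — is the first half: justifying that the length bound is exactly $\sum_e [\overline{G}_e : G_e]$ rather than something slightly larger, and making precise the claim that in a \emph{nontrivial} expansion at least one edge group strictly enlarges (so that the "potential function" $\sum_e [\overline{G}_e : (G_i)_e]$ genuinely strictly decreases). This rests on unwinding the definition of the expansion morphism: if $G_i$ is not primitive then some attaching image $\varphi_e^{\pm}(g_e)$ has nontrivial torsion factor, so $d_e^{\pm} > 1$, hence $d_e > 1$ and $(G_i)_e = \tfrac{1}{d_e}(G_{i-1})_e \supsetneq (G_{i-1})_e$ after reindexing — so the edge group for that $e$ strictly grows. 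Combined with the nested inclusions inside $\overline{G}_e$ from Lemma~\ref{lem:edgeExpansionFactorisation}, this forces strict descent of the integer-valued potential, giving the bound. Everything else is bookkeeping; no hard analysis is needed.
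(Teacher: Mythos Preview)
Your proposal is correct and follows essentially the same approach as the paper's proof: both use Lemma~\ref{lem:edgeExpansionFactorisation} to sandwich each $(G_i)_e$ inside $\overline{G}_e$ and then observe that the sum of indices $\sum_e[\overline{G}_e:(G_i)_e]$ strictly decreases at each nontrivial expansion, while the converse follows immediately from the fact that a trivial expansion characterizes primitivity. Your write-up is in fact more detailed than the paper's, which simply asserts the bound after invoking Lemma~\ref{lem:edgeExpansionFactorisation}; your explicit potential-function formulation and the check that some $d_e>1$ forces strict growth of an edge group are exactly the content being compressed there.
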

\begin{proof}
 Let $\overline{f}: G \rightarrow \overline{G}$ be a primitive target for $G$.
By Lemma~\ref{lem:edgeExpansionFactorisation}, $\overline{f}$ factors through the map
$G\rightarrow G_m$ for each $m$.
 Therefore, the sum of the degrees of each edge group $G_e$ in $\overline{G}_e$ provides an upper bound on the length of a sequence of edge expansions.

The composition $G=G_1\rightarrow G_t=\overline{G}$
yields the converse.
For if $G_t$ is not primitive then $d_e \neq 1$ for some edge $e$. Hence there is a nontrivial expansion of $G_t$.
\end{proof}

The expansion sequence is computable so Lemma~\ref{lem:primitive implies terminates} shows that there is an algorithm which can find a primitive target, should one exist.  Specifically, the algorithm would perform edge expansions until the expansion sequence terminates.
An effective algorithm would also need to identify when $G$ is non-residually finite.
Suppose that $G \rightarrow G_1 \rightarrow G_2 \rightarrow\cdots$ is a non-terminating expansion sequence. Then we say the expansion sequence is \emph{recurrent} if $G_i$ is isomorphic to $G_j$ via some rigid isomorphism, for some $i < j$.
Therefore if either a terminating or a recurrent expansion sequence could be found in finite time, the question of residual finiteness would be algorithmically decidable.
Unfortunately, in general, there are non-residually finite tubular groups with non-recurrent, infinite expansion sequences.

\begin{exmp} \label{exmp:nonRecurrentExample}
 Let
 \[
  G = G_0 = \langle \mathbb{Z} \times \mathbb{Z}, s, t \ \mid \ s(1,0)s^{-1} = (2,0), \ t(0,1)t^{-1} = (1,1) \rangle.
 \]

 There is a single non-primitive vector $(2,0)$ among the relations so the first edge expansion is given by dividing the first edge group by two to obtain
 
 \[
  G_1 = \langle \textstyle{ \frac{1}{2}} \mathbb{Z} \times \mathbb{Z}, s,t \ \mid \ s(\textstyle{\frac{1}{2}},0)s^{-1} = (1,0), \ t(0,1)t^{-1} = (1,1) \rangle.
 \]

 Observe that the elements $(0,1)$ and $(1,1)$ remain primitive in $\textstyle{\frac{1}{2}} \mathbb{Z} \times \mathbb{Z}$, so the only non-primitive element in the relations is $(1,0)$.
 Therefore 
 the $n$-th term in the  expansion sequence is:
 \[
  G_n = \langle \textstyle{\frac{1}{2^n}} \mathbb{Z} \times \mathbb{Z}, s,t \mid s(\textstyle{\frac{1}{2^n}},0)s^{-1} = (\frac{1}{2^{n-1}},0), t(0,1)t^{-1} = (1,1) \rangle
 \] 
 Thus the expansion sequence does not terminate so $G$ is not residually finite.
 But $G_n \neq G_m$ for $n \neq m$.
 Indeed, since all maximal rank 2 free abelian groups in $G_n$ are conjugate to the vertex group $\frac{1}{2^n} \mathbb{Z} \times \mathbb{Z}$, we can assume that an isomorphism $G_n \rightarrow G_m$ sends $\frac{1}{2^n} \mathbb{Z} \times \mathbb{Z}$ to $\frac{1}{2^m} \mathbb{Z} \times \mathbb{Z}$.
 Any conjugate of the vertex group in $G_n$ that nontrivially intersects $\frac{1}{2^n} \mathbb{Z} \times \mathbb{Z}$ does so in a cyclic subgroup $\langle (\frac{1}{2^n}, 0) \rangle$, $\langle (\frac{1}{2^{n-1}}, 0) \rangle$,
 $\langle (0, 1) \rangle$, and $\langle (1,1) \rangle$.
 Similarly, in $G_m$ nontrivial intersections of conjugates of the vertex group intersect $\frac{1}{2^m} \mathbb{Z} \times \mathbb{Z}$ in the cyclic subgroups $\langle (\frac{1}{2^m}, 0) \rangle$, $\langle (\frac{1}{2^{m-1}}, 0) \rangle$,
 $\langle (0, 1) \rangle$, and $\langle (1,1) \rangle$.
 By identifying $G_n \cong \mathbb{Z}^2$ we can compute the \emph{unsigned intersection numbers} of these cyclic subgroups.
 The unsigned intersection number of $\langle (p,q) \rangle $ and $\langle (r , s) \rangle$ is
 the absolute value of the determinant of the matrix $ \begin{pmatrix} p & r \\
                       q & s \\
       \end{pmatrix}$.
 The unsigned intersection number is invariant up to multiplication by elements of $GL_2(\mathbb{Z})$.
 So, as any isomorphism $G_n \rightarrow G_m$ must send conjugates of vertex groups to conjugates of vertex groups, the unsigned intersection numbers are an invariant of $G_n$.
 The largest intersection number of $G_n$ is $2^n$ and is achieved by the vectors $(0,1)$ and $(1,1)$, which are identified with $(0,1)$ and $(2^n, 1)$ when $(G_n)_v$ is identified with $\mathbb{Z}^2$.
 Therefore $G_n$ is not isomorphic to $G_m$ if $n \neq m$.

 

 
 Note that if we consider the subtubular group 
 \[
  G' = G'_0 = \langle \mathbb{Z} \times \mathbb{Z}, s \ \mid \ s(1,0)s^{-1} = (2,0) \rangle.
 \]
 Then we can compute that
 \[
  G'_1 = \langle \textstyle{\frac{1}{2}}\mathbb{Z} \times \mathbb{Z}, s \ \mid \ s(\textstyle{\frac{1}{2}},0)s^{-1} = (1,0) \rangle.
 \]
 As there is a rigid isomorphism $G'_1 \rightarrow G'$ we deduce that $G'$ is recurrent.
\end{exmp}
 
Consideration of examples and computer experiments leads to the following:
  \begin{prob}
   Does every non-residually finite tubular group contain a subtubular group with recurrent expansion sequence?
  \end{prob}

The following example illustrates that even a terminating expansion sequence can be arbitrarily long for a fixed graph $\Gamma$.

\begin{exmp}\label{exmp:termlength}
For each $n$ let $G^{(n)}$
be the tubular group presented by:
 \[ \langle \mathbb{Z} \times \mathbb{Z}, t \mid t(1,0)t^{-1} = (2, 2^n) \rangle\] 
 $G^{(n-1)}$  is isomorphic to the expansion of $G^{(n)}$ which has the following presentation:
 \[\langle \textstyle{\frac12} \mathbb{Z} \times \mathbb{Z}, t \mid t (\textstyle{\frac12}, 0) t^{-1} = (1,2^{n-1}) \rangle
 \]
We thus have the terminating expansion sequence 
 $G^{(n)} \rightarrow G^{(n-1)} \rightarrow \cdots \rightarrow G^{(1)} \rightarrow G^{(0)}$.
 So the expansion sequence of $G^{(n)}$ has length $n+1$.
\end{exmp}

\begin{com}The linearity section is commented out for now\end{com}

 \section{The Residually Finite Snowflake Groups} \label{sec:SnowflakeGroups}
\emph{Snowflake groups} are the following tubular groups 
 for positive integers $p\geq q$:
 \[
  G_{pq} = \langle \mathbb{Z}^2 ,s,t \mid (q,0)^s = (p,1), (q,0)^t = (p,-1) \rangle 
 \]
Brady and Bridson showed that  $G_{pq}$ has Dehn function $\simeq n^{2\alpha}$ for ${\alpha = \log_2(\frac{2p}{q}})$ in \cite{BradyBridson2000}.
Gardam and Woodhouse showed that many snowflake groups are finite index subgroups of one-relator groups  \cite{GardamWoodhouse17}.
  This provided examples of non-automatic one-relator groups that 
do not contain Baumslag-Solitar subgroups of the form $BS(m,n)=
\langle a,t \mid (a^m)^t=a^n\rangle$ with $m\neq \pm n$.
Subsequently, Button observed that some of these one-relator groups are CAT(0) but not residually finite and has classified the residually finite snowflake groups~\cite{Button}. 
We now reproduce his classification using our method.  
  \begin{thm}
    $G_{pq}$ is residually finite if and only if $q$ divides $2p$.
  \end{thm}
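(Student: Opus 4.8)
The plan is to invoke Theorem~\ref{thm:main equivalence}, which reduces residual finiteness of $G_{pq}$ to the existence of a regulating $\mathcal{E}$-tuple. Here $G_{pq}$ splits over the graph with a single vertex $v$, vertex group $G_v=\mathbb{Z}^2$, and two loop-edges $e_s,e_t$ carrying the stable letters $s,t$. Fixing generators $g_s,g_t$ of the edge groups, the two edge inclusions for $e_s$ send $g_s$ to $(q,0)$ and to $(p,1)$, and those for $e_t$ send $g_t$ to $(q,0)$ and to $(p,-1)$. For an $\mathcal{E}$-tuple $\kk=(k_s,k_t)$ the associated vertex subgroup is $L:=G_v^{(\kk)}=\langle (k_sq,0),\,(k_sp,k_s),\,(k_tq,0),\,(k_tp,-k_t)\rangle\leqslant\mathbb{Z}^2$, and $\kk$ is regulating precisely when $\varphi^{\pm}_e(k_eg_e)$ is primitive in $L$ for each edge $e$. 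By Remark~\ref{rem:etupcp} we may assume $\gcd(k_s,k_t)=1$, and replacing each $k_e$ by $|k_e|$ changes no relevant subgroup, so we also take $k_s,k_t\geqslant 1$.

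The heart of the argument is computing $L\cap(\mathbb{Z}\times\{0\})$. An integer combination $a(k_sq,0)+b(k_sp,k_s)+c(k_tq,0)+d(k_tp,-k_t)$ has vanishing second coordinate iff $bk_s=dk_t$, i.e. $(b,d)=m(k_t,k_s)$ for some $m\in\mathbb{Z}$, in which case its first coordinate is $ak_sq+ck_tq+2mk_sk_tp$; hence, using $\gcd(k_s,k_t)=1$, we get $L\cap(\mathbb{Z}\times\{0\})=\gcd(q,2k_sk_tp)\,\mathbb{Z}\times\{0\}$. A nonzero vector lying on the first axis is primitive in $L$ if and only if it generates $L\cap(\mathbb{Z}\times\{0\})$: a proper root of such a vector necessarily has vanishing second coordinate, so lies on the axis. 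Consequently $(k_sq,0)$ is primitive in $L$ iff $k_sq=\gcd(q,2k_sk_tp)$, and since $\gcd(q,2k_sk_tp)\leqslant q\leqslant k_sq$ this forces $k_s=1$ together with $q\mid 2k_tp$; the same reasoning applied to $(k_tq,0)$ gives $k_t=1$ and $q\mid 2k_sp$. Thus a regulating $\mathcal{E}$-tuple must be $\kk=(1,1)$, and its existence requires $q\mid 2p$.

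For the converse, suppose $q\mid 2p$ and take $\kk=(1,1)$. Then $L=\langle(q,0),(p,1),(p,-1)\rangle$; the vectors $(p,1)$ and $(p,-1)$ are primitive in $L$ since each has a coordinate equal to $\pm1$, and $(q,0)$ generates $L\cap(\mathbb{Z}\times\{0\})=\gcd(q,2p)\,\mathbb{Z}\times\{0\}$ exactly because $q\mid 2p$. Hence $(1,1)$ is regulating, so $G_{pq}$ has a primitive domain by Lemma~\ref{lem:goodtup} and is residually finite by Theorem~\ref{thm:main equivalence}. Together with the previous paragraph this proves that $G_{pq}$ is residually finite if and only if $q\mid 2p$.

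The main obstacle is just the lattice bookkeeping: identifying $L\cap(\mathbb{Z}\times\{0\})$ and observing that primitivity of an axis vector in $L$ is equivalent to generating that rank-one sublattice; everything else is routine. One could alternatively run the expansion sequence of Section~\ref{sec:expansion algorithm}, in the spirit of Examples~\ref{exmp:terminates} and~\ref{exmp:no terminate}, but following the vertex group through successive expansions is messier than the $\mathcal{E}$-tuple computation above, so I would prefer the route described here.
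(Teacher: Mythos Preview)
Your proof is correct. It differs from the paper's proof in Section~\ref{sec:SnowflakeGroups}, which runs the expansion sequence: one expansion of $G_{pq}$ yields a tubular group $G_{pq}'$ whose vertex group (computed via Hermite normal form) is generated by $\bigl(\gcd(q,2p)/q,\,0\bigr)$ and $(p/q,\,1/q)$; this is primitive when $q\mid 2p$, and otherwise the next expansion is a scaling isomorphism, so the sequence is recurrent and $G_{pq}$ fails to be residually finite by Lemma~\ref{lem:primitive implies terminates}. Your route instead uses condition~\eqref{cond6} of Theorem~\ref{thm:main equivalence}, which is essentially the approach the paper takes a second time in the snowflake example of Section~\ref{sec:OneVertexSolution}. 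There the reduction to $\kk=(1,1)$ is obtained via Proposition~\ref{prop:singleVertexClassification} (one computes $t_1=t_2=1$, forcing $z_1z_2=1$), whereas you get it from the direct observation that $\gcd(q,2k_sk_tp)\le q\le k_sq$ forces $k_s=1$, and symmetrically $k_t=1$. Your version is slightly more self-contained, but the two $\mathcal{E}$-tuple arguments become identical once they reach the check that $(q,0)$, $(p,1)$, $(p,-1)$ are simultaneously primitive in the lattice they generate iff $q\mid 2p$. The expansion-sequence proof, by contrast, illustrates the machinery of Section~\ref{sec:expansion algorithm} and exhibits the recurrence phenomenon concretely, at the cost of the matrix reduction you wished to avoid.
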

  \begin{proof}
  If $q = 1$ then $G_{pq}$ is a primitive tubular group and hence residually finite by Theorem~\ref{thm:main equivalence}.
  If $q >1$ then we perform the expansion map $G_{pq} \rightarrow G_{pq}'$ where each edge group is divided by $q$. 
 The  vertex group of $G_{pq}'$ is:
  \[
   \bigg\langle \ \Bigl(\frac{p}{q}, \ \frac{1}{q}\Bigr),\  \Bigl(\frac{p}{q},\  -\frac{1}{q}\Bigr),\  (1,0),\ (0,1) \  \bigg\rangle
   \]

We swap the components of these generators, scale them by $q$ and set them as the rows of a matrix below.  We obtain a two element basis by performing integer row operations to reduce the matrix to Hermite normal form:
   
   \[
    \begin{bmatrix}
     1 & p \\
     -1 & p \\
     0 & q \\
     q & 0 \\
    \end{bmatrix}
   \; \rightarrow \;
   \begin{bmatrix}
    1 & p \\
    0 & 2p \\
    0 & q \\
    0 & -qp \\
   \end{bmatrix}
   \; \rightarrow \;
   \begin{bmatrix}
    1 & p \\
    0 & \gcd(2p, q)\\
    0 & 0 \\
    0 & 0 \\
   \end{bmatrix}
   \]
Thus $G_{pq}'$ has the following presentation:
  \[
   G_{pq}' = \left\langle  \ \Big(\frac{\gcd(q,2p)}{q}, 0 \Big), \ \Big(\frac{p}{q}, \frac{1}{q} \Big), \  s,t \; \ \Big| \; \Big(1,0\Big)^s = \Big(\frac{p}{q}, \frac{1}{q}\Big), \ \Big(1,0\Big)^t = \Big(\frac{p}{q}, \  \frac{-1}{q}\Big) \  \right\rangle.
  \]

  If $q\mid 2p$ then $G_{pq}'$ is primitive and hence $G_{pq}$ is
  residually finite, by Lemma~\ref{lem:primitive implies terminates}
  and Theorem~\ref{thm:main equivalence}.  Otherwise, $G_{pq}'$ is not
  primitive and has a nontrivial expansion map where each edge group
  is divided by the degree of the torsion factor in
  \[ \Big\langle \Big(\frac{\gcd(q,2p)}{q}, 0 \Big), \Big(\frac{p}{q},
    \frac{1}{q}\Big) \Big\rangle \ \Big/ \ \left\langle \Big(1,0\Big)
    \right\rangle.\] Since the vertex group in $G_{pq}'$ is generated
  by the elements conjugated by $s$ and $t$ we deduce that the
  expansion map $G_{pq}' \rightarrow G_{pq}''$ is a scaling morphism
  and therefore an isomorphism.  Thus, the expansion sequence is
  recurrent if $q \nmid 2p$ and so $G_{pq}$ is not residually finite
  by Lemma~\ref{lem:primitive implies terminates} and
  Theorem~\ref{thm:main equivalence}.
\end{proof}
 
\section{Deciding Residual Finiteness for Single Vertex Group} \label{sec:OneVertexSolution}

Let $G$ be a tubular group with a single vertex group $G_v$.  We will
show that the problem of determining the residual finiteness of $G$ is
decidable.

\begin{prop}
  \label{prop:singleVertexClassification}
  Let $G$ be a tubular group with a single vertex group $G_v$.  Assume
  that $G$ has at least two edges and that
  $\bigl\langle \varphi_e^{+}(G_e), \varphi_e^{-}(G_e) \bigl\rangle <
  G_v$ has rank 2 for every edge $e$.  Let $e_1, \ldots, e_n$ be the
  edges in the underlying graph of $G$.  Let $u_i, v_i \in G_v$
  correspond to the generators of the cyclic subgroups of $G_v$
  conjugated by the stable letter associated to $e_i$.  Let
  $t_i \in \rationals_{>0}$ be minimal such that
  $t_iu_i \in \langle u_{i+1}, v_{i+1} \rangle$, where the indices are
  considered modulo $n$.  Let $\kk = (k_e)_{e \in \mathcal{E}}$ be
  given by $k_{e_i} = k_i$ and write $\kk = (k_1, k_2, \ldots, k_n)$.
  If $\kk$ is regulating then
  \[ \kk = \bigg( m, m\frac{z_1}{t_1}, \ldots, m \frac{z_1z_2 \cdots
      z_{n-1}}{t_1t_2 \cdots t_{n-1}} \bigg) \] for some
  $m,z_1,z_2,\ldots,z_n \in \integers$ for which 
 $z_1 \cdots z_n = t_1 \cdots t_n $.
\end{prop}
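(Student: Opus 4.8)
The plan is to distill the defining condition into a simple multiplicative recursion among the $k_i$ and then telescope it around the cycle. First I would reduce to the case where every entry $k_e$ is a positive integer: by Remark~\ref{rem:etupcp} (common rescaling) together with the fact that $k_eG_e$ and the primitivity of $\varphi_e^{\pm}(k_eg_e)$ depend only on $|k_e|$, regulating-ness is unchanged under any sign change or common rescaling of the entries; and no $k_e$ is $0$, since otherwise $\varphi_e^{\pm}(k_eg_e)=0$, which is never primitive. I work inside $V:=G_v\otimes\rationals\cong\rationals^2$ and, for each $i$, write $\Lambda_i:=\langle u_i,v_i\rangle\leqslant G_v$, a rank-$2$ lattice by hypothesis. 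Since $G$ has a single vertex, $G^{(\kk)}_v$ is the subgroup of $G_v$ generated by $\{k_iu_i,k_iv_i : 1\le i\le n\}$, and $\kk$ is regulating precisely when each $k_iu_i$ and each $k_iv_i$ is primitive in $G^{(\kk)}_v$.

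The crux is to show that for each $i$ (indices mod $n$) the rational number $z_i:=k_{i+1}t_i/k_i$ is a positive integer. By minimality of $t_i$ the rank-one lattice $\Lambda_{i+1}\cap\rationals u_i$ equals $\integers\,t_iu_i$, hence $k_{i+1}\Lambda_{i+1}\cap\rationals u_i=\integers\,k_{i+1}t_iu_i$. Put $L_i:=\integers\,k_iu_i+k_{i+1}\Lambda_{i+1}$; this is a rank-$2$ sublattice of $G^{(\kk)}_v$ containing $k_iu_i$, so $k_iu_i$ — being primitive in $G^{(\kk)}_v$ — is primitive in $L_i$. Now a primitive vector $x$ of a lattice $L$ generates the rank-one lattice $L\cap\rationals x$: writing $x=c\gamma$ with $\gamma$ a generator of $L\cap\rationals x$ and $c\in\integers$, the case $|c|\ge 2$ would exhibit $x$ as a proper multiple of $\gamma\in L$, a contradiction. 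Applying this with $L=L_i$ and $x=k_iu_i$, and using $L_i\cap\rationals u_i=\integers\,k_iu_i+\integers\,k_{i+1}t_iu_i$, I conclude $k_{i+1}t_iu_i\in\integers\,k_iu_i$, i.e. $z_i\in\integers$; positivity is immediate from positivity of $k_i,k_{i+1},t_i$.

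To finish, set $m:=k_1\in\integers_{>0}$. The identities $k_{i+1}=k_iz_i/t_i$ for $1\le i\le n$ give, by induction, $k_i=m\,z_1\cdots z_{i-1}/(t_1\cdots t_{i-1})$ (empty products equal to $1$), which is exactly the asserted shape of $\kk$. Going once more around the cycle, $m=k_1=k_{n+1}=k_nz_n/t_n=m\,z_1\cdots z_n/(t_1\cdots t_n)$, and cancelling $m$ yields $z_1\cdots z_n=t_1\cdots t_n$, completing the argument.

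The only step needing an idea is the second one: although regulating-ness is a statement about primitivity inside the possibly large lattice $G^{(\kk)}_v$ spanned by all $2n$ vectors $k_iu_i,k_iv_i$, it already pins down consecutive pairs of edges through the elementary fact about primitive vectors and rank-one intersections, and this is where the constants $t_i$ enter. The remaining ingredients — the reduction to positive entries, the two routine intersection identities $\rationals u_i\cap(\integers k_iu_i+k_{i+1}\Lambda_{i+1})=\integers k_iu_i+(\rationals u_i\cap k_{i+1}\Lambda_{i+1})$ and $\rationals u_i\cap k_{i+1}\Lambda_{i+1}=k_{i+1}(\rationals u_i\cap\Lambda_{i+1})$, and the telescoping — are bookkeeping.
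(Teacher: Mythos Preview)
Your proof is correct and follows essentially the same route as the paper's: both show that $z_i:=k_{i+1}t_i/k_i\in\integers$ by observing that $k_{i+1}t_iu_i\in G^{(\kk)}_v$ is a rational multiple of the primitive element $k_iu_i$, and then telescope the relations $k_{i+1}/k_i=z_i/t_i$ around the cycle. Your version is more explicit---you pass through the sublattice $L_i$, spell out the primitive-vector lemma, and handle signs and the exclusion of $k_e=0$---but these are elaborations rather than a different argument.
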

\begin{proof}
  Suppose $\kk$ is regulating.  Then
  \[ \frac{k_{i+1}t_i}{k_i} k_i u_i \; = \; k_{i+1}t_iu_i \; \in \;
    k_{i+1} \langle u_{i+1}, v_{i+1} \rangle \; \leqslant \;
    G^{(\kk)}_v.\] Since $k_iu_i$ is primitive in $G^{(\kk)}_v$ by the
  definition of regulating, we deduce that
  $\frac{k_{i+1}t_i}{k_i} = z_i$ for some integer $z_i \in \integers$.
  Hence $\frac{k_{i+1}}{k_i} = \frac{z_i}{t_i}$ and so
  \[\frac{z_1\cdots z_n}{t_1 \cdots t_n} 
  \ =  \
  \frac{k_2}{k_1}\frac{k_3}{k_2}\cdots \frac{k_n}{k_{n-1}}\frac{k_1}{k_n}
 \ = \   1.\]
 Setting $m = k_1$ we recover the claim. 
\end{proof} 

We apply Proposition~\ref{prop:singleVertexClassification} in the
following example.

\begin{exmp}
  Let $G$ be the tubular group with the following presentation.
  \[ \bigl\langle \integers \times \integers, s, t \mid \ (2,-4)^s =
    (-1,-2), \ (-6,-6)^t = (2,2) \bigr\rangle \] Following
  Proposition~\ref{prop:singleVertexClassification}, let
  $u_1 = (2,-4), \ v_1 = (-1,-2), \ u_2 = (-6,6), \ v_2 = (2,2)$, and
  compute that $t_1 = 2$ and $t_2 = \frac{4}{3}$.  Since $t_1 t_2$ is
  not an integer, there do not exist integers $z_1$ and $z_2$ such
  that $z_1 z_2 = t_1 t_2$. Hence,
  Proposition~\ref{prop:singleVertexClassification} implies that $G$
  has no regulating $\mathcal{E}$-tuple.  Hence $G$ is not residually
  finite, by Theorem~\ref{thm:main equivalence}.
\end{exmp}

\begin{exmp}
  Recall that the snowflake group $G_{pq}$ is the tubular group
  presented by
  \[ \bigl\langle \mathbb{Z}^2 ,s,t \mid (q,0)^s = (p,1), (q,0)^t =
    (p,-1) \bigr\rangle \] for positive integers $p \ge q$.  Following
  Proposition~\ref{prop:singleVertexClassification}, let
  $u_1 = u_2 = (q,0), \ v_1 = (p,1), \ v_2 = (p,-1)$, and compute that
  $t_1 = t_2 = 1$.  Then, by
  Proposition~\ref{prop:singleVertexClassification} and
  Remark~\ref{rem:etupcp}, there is a regulating $\mathcal{E}$-tuple
  for $G_{pq}$ if and only if $(k_1, k_2) = (1,1)$ is a regulating
  $\mathcal{E}$-tuple.  That is, if and only if $(q,0)$, $(p,1)$ and
  $(p,-1)$ are primitive in the subgroup
  $H = \bigl\langle (q,0), (p,1), (p,-1) \bigr\rangle$ that they
  generate.  If \[ r(p,\pm 1) = a(q,0) + b(p, \mp 1) \] for some
  $r \in \rationals$ and $a,b \in \integers$ then
  $r = -b \in \integers$ and so $(p, \pm 1)$ is always primitive in
  $H$.  On the other hand \[ r(q,0) = a(p,1) + b(p, -1) \] holds for
  some $r \in \rationals$ and $a,b \in \integers$ if and only if
  $a = b$ and $r = \frac{2p}{q}a$.  Hence $(q,0)$ is primitive in $H$
  if and only if $q | 2p$.  Thus we see that $G_{pq}$ has a regulating
  $\mathcal{E}$-tuple if and only if $q | 2p$.
\end{exmp}

Theorem~\ref{thm:decidability} follows from Theorem~\ref{thm:main
  equivalence} and the following lemma.

\begin{lem}
  \label{lem:tupalgo} Let $G$ be a tubular group with a single vertex group
  $G_v$.  There is an algorithm which determines if $G$ has a
  regulating $\mathcal{E}$-tuple.
\end{lem}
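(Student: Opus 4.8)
The plan is to convert Proposition~\ref{prop:singleVertexClassification} into a terminating search; combined with Theorem~\ref{thm:main equivalence} this also yields Theorem~\ref{thm:decidability}, but here I only need to decide whether $G$ has a regulating $\mathcal{E}$-tuple. First I would dispose of the situations not covered by Proposition~\ref{prop:singleVertexClassification}. If $G$ has no edges then $G \cong \integers^2$ and the empty $\mathcal{E}$-tuple is (vacuously) regulating. If $G$ has exactly one edge, then by Remark~\ref{rem:etupcp} it suffices to test $\kk = (1)$, and $(1)$ is regulating precisely when the two conjugated generators $u,v$ are both primitive in the subgroup $\langle u,v\rangle \leqslant \integers^2$; whether a given element of a finitely generated subgroup of $\integers^2$ is primitive is decided by a Hermite normal form computation. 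If some edge $e$ has $\bigl\langle \varphi_e^+(G_e),\varphi_e^-(G_e)\bigr\rangle$ of rank $\leqslant 1$, then for every $\mathcal{E}$-tuple $\kk$ the elements $\varphi_e^+(k_e g_e)$ and $\varphi_e^-(k_e g_e)$ lie on a common rational line in $\integers^2$, so they cannot both be primitive in $G^{(\kk)}_v$ unless $\langle \varphi_e^+(G_e)\rangle = \langle \varphi_e^-(G_e)\rangle$; this equality is decidable, when it fails $G$ has no regulating $\mathcal{E}$-tuple (equivalently $\langle \varphi_e^+(G_e),\varphi_e^-(G_e),t_e\rangle$ is a non-residually-finite Baumslag--Solitar subgroup), and when it holds the edge $e$ merely imposes the extra requirement that $\varphi_e^+(k_e g_e)$ be primitive in $G^{(\kk)}_v$, which I carry along as side data while inducting down to the case in which every edge has rank-$2$ span.

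Now suppose $G$ has at least two edges, each of rank-$2$ span, so Proposition~\ref{prop:singleVertexClassification} applies. Fix a cyclic ordering $e_1,\dots,e_n$ and compute the rationals $t_1,\dots,t_n \in \rationals_{>0}$; since each $\langle u_{i+1},v_{i+1}\rangle$ has rank $2$ it is a finite-index subgroup of $\integers^2$, so $u_i$ has rational coordinates with respect to a $\integers$-basis of it, and $t_i$ is the least positive rational scaling those coordinates into $\integers^2$ --- a finite computation. By Proposition~\ref{prop:singleVertexClassification}, if $G$ has a regulating $\mathcal{E}$-tuple then $t_1 \cdots t_n \in \integers_{>0}$, so if this fails I output ``no regulating $\mathcal{E}$-tuple''. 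Otherwise set $N = t_1 \cdots t_n \in \integers_{>0}$. Again by Proposition~\ref{prop:singleVertexClassification}, every regulating $\mathcal{E}$-tuple is, up to scaling, of the shape $\kk = \bigl(1, z_1/t_1, \dots, (z_1\cdots z_{n-1})/(t_1\cdots t_{n-1})\bigr)$ with $z_i \in \integers_{>0}$ and $z_1 \cdots z_n = N$; there are only finitely many factorizations $N = z_1 \cdots z_n$ into positive integers, and each one determines a single integral candidate $\mathcal{E}$-tuple after clearing denominators (harmless by Remark~\ref{rem:etupcp}). For each of these finitely many candidates $\kk$ I test directly whether it is regulating: compute a basis of $G^{(\kk)}_v = \langle k_i u_i, k_i v_i : i \rangle \leqslant \integers^2$ via Hermite normal form and check that each $k_i u_i$ and each $k_i v_i$ has coprime coordinates in that basis (together with the side conditions recorded during the reduction step). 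Output ``yes'' iff some candidate passes. This is correct because Proposition~\ref{prop:singleVertexClassification} guarantees that every regulating $\mathcal{E}$-tuple appears, up to scaling, among the candidates, while the test applied to each candidate is exactly the definition of ``regulating''.

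I expect the main obstacle to be the bookkeeping around the degenerate edges: checking that deleting each rank-$\leqslant 1$ edge while recording its primitivity requirement yields an equivalent ``constrained regulating $\mathcal{E}$-tuple'' problem on a tubular group meeting the hypotheses of Proposition~\ref{prop:singleVertexClassification}, and that the finite search above still decides the constrained version. The remaining ingredients --- computing the $t_i$, enumerating the factorizations of $N$, and testing primitivity inside a subgroup of $\integers^2$ --- are routine effective linear algebra over $\integers$ and $\rationals$.
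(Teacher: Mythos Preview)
Your overall strategy matches the paper's: dispose of degenerate edges, then invoke Proposition~\ref{prop:singleVertexClassification} together with Remark~\ref{rem:etupcp} to reduce to a finite list of candidate $\mathcal{E}$-tuples and test each one. The treatment of the main case (at least two edges, each with rank-$2$ span) is correct and essentially identical to the paper's argument.

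The genuine gap is precisely where you flag it. Your plan for an edge $e$ with $\varphi_e^+(G_e)=\varphi_e^-(G_e)$ is to delete $e$ while ``carrying along as side data'' the requirement that $\varphi_e^+(k_e g_e)$ be primitive in $G^{(\kk)}_v$. But once $e$ is deleted, $k_e$ is no longer among the variables you are searching over, and $G^{(\kk)}_v$ itself depends on that $k_e$ (it contains $k_e\varphi_e^+(g_e)$, which may enlarge $G^{(\kk')}_v$ and destroy primitivity of the rank-$2$ edge vectors). So your side condition is not a condition on the reduced problem for $G'$, and you give no argument bounding the search over the deleted $k_e$'s. The paper avoids side data entirely by proving the stronger statement that $G$ has a regulating $\mathcal{E}$-tuple if and only if $G'$ does. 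One direction is immediate: if $\kk$ is regulating for $G$ then its restriction $\kk'$ is regulating for $G'$, since $G^{(\kk')}_v\subseteq G^{(\kk)}_v$ and primitivity in a group implies primitivity in any subgroup. For the converse, given a regulating $\kk'$ for $G'$, the paper takes the minimal $q\in\rationals_{>0}$ with $q\varphi_e^{\pm}(g_e)\in G^{(\kk')}_v$ (or any $k_e$ if this intersection is trivial), chooses $m$ with $mq\in\integers$, and extends $m\kk'$ by $k_e=mq$; then $k_e\varphi_e^{\pm}(g_e)$ already lies in $G^{(m\kk')}_v$, so $G^{(\kk)}_v=G^{(m\kk')}_v$, all previous primitivity conditions survive the scaling (Remark~\ref{rem:etupcp}), and minimality of $q$ gives primitivity of $k_e\varphi_e^{\pm}(g_e)$. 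With this equivalence in hand the algorithm simply discards every such edge and runs your finite search on what remains, with no side constraints at all.
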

\begin{proof}
  The algorithm first checks to see if the images
  $\varphi_e^{\pm}(G_e)$ of any edge group $G_e$ are commensurable but
  distinct in $G_v$.  In such a case we have
  $\varphi_e^{+}(k_e g_e) = q \varphi_e^{-}(k_e g_e)$ for some
  $q \in \rationals - \{1\}$ where $g_e$ is a generator of
  $G_e$.  Then the $\varphi_e^{\pm}(k_eg_e)$ cannot both be primitive
  in any subgroup of $G_v$ so no $\mathcal{E}$-tuple is regulating and
  the algorithm may return a ``no'' answer.

  Henceforth we assume that if $\varphi_e^{\pm}(G_e)$ are
  commensurable for some $e \in \mathcal{E}$ then they are equal.  Let
  $G'$ be the subtubular group obtained from $G$ by removing an edge
  $e$ for which the $\varphi_e^{\pm}(G_e)$ are equal.  Given a
  regulating $\mathcal{E}'$-tuple $\kk'$ for $G'$ we may obtain a
  regulating $\mathcal{E}$-tuple $\kk$ for $G$ as follows.  If
  $G^{(\kk')}_v \cap \varphi_e^{\pm}(G_e)$ is trivial then we obtain
  $\kk$ by extending $\kk'$ with any $k_e \in \integers - \{0\}$.
  Otherwise, let $q \in \rationals_{>0}$ be minimal such that
  $q\varphi_e^{\pm}(G_e) < G^{(\kk')}_v$ and choose
  $m \in \integers - \{0\}$ such that $mq \in \integers$.  We obtain
  $\kk$ by extending $m\kk' = (mk'_e)_{e \in \mathcal{E}'}$ with
  $k_e = mq$.

  Thus the algorithm discards all edges $e$ for which the
  $\varphi_e^{\pm}(G_e)$.  If $G$ has a single edge group $G_e$ then
  any $k_e \in \integers - \{0\}$ gives a regulating $\kk$ and
  so the algorithm returns a ``yes'' answer in this case.

  At this point in the algorithm $G$ has at least two edges and for
  each edge $e$ the $\varphi_e^{\pm}(G_e)$ are not commensurable.  By
  Proposition~\ref{prop:singleVertexClassification} and
  Remark~\ref{rem:etupcp}, we need only consider finitely many
  integers $z_1, \ldots, z_n$ and $m$ to check if $G$ has a regulating
  $\mathcal{E}$-tuple.  For each $z_1, \ldots, z_n$ and $m$ we compute
  the corresponding $G^{(\kk)}_v$ and determine whether the $k_iu_i$
  and $k_iv_i$ are primitive in $G^{(\kk)}_v$.
\end{proof}

\bibliographystyle{plain}
\bibliography{wise,ref2}

\def\cprime{$'$} \def\polhk#1{\setbox0=\hbox{#1}{\ooalign{\hidewidth
  \lower1.5ex\hbox{`}\hidewidth\crcr\unhbox0}}} \def\cprime{$'$}
  \def\cprime{$'$} \def\polhk#1{\setbox0=\hbox{#1}{\ooalign{\hidewidth
  \lower1.5ex\hbox{`}\hidewidth\crcr\unhbox0}}}
\begin{thebibliography}{10}

\bibitem{AndreadakisRaptisVarsos88}
S.~Andreadakis, E.~Raptis, and D.~Varsos.
\newblock A characterization of residually finite {HNN}-extensions of finitely
  generated abelian groups.
\newblock {\em Arch. Math. (Basel)}, 50(6):495--501, 1988.

\bibitem{BradyBridson2000}
N.~Brady and M.~R. Bridson.
\newblock There is only one gap in the isoperimetric spectrum.
\newblock {\em Geom. Funct. Anal.}, 10(5):1053--1070, 2000.

\bibitem{BKS87}
R.~G. Burns, A.~Karrass, and D.~Solitar.
\newblock A note on groups with separable finitely generated subgroups.
\newblock {\em Bull. Austral. Math. Soc.}, 36(1):153--160, 1987.

\bibitem{Button17}
Jack Button.
\newblock Tubular free by cyclic groups act freely on {CAT}(0) cube complexes.
\newblock {\em Canad. Math. Bull.}, 60(1):54--62, 2017.

\bibitem{Button}
Jack~O. Button.
\newblock Tubular groups and non positive curvature.
\newblock Available at \url{https://arxiv.org/abs/1712.00290}, 2017.

\bibitem{Cashen2010}
Christopher~H. Cashen.
\newblock Quasi-isometries between tubular groups.
\newblock {\em Groups Geom. Dyn.}, 4(3):473--516, 2010.

\bibitem{CrokeKleiner2000}
Christopher~B. Croke and Bruce Kleiner.
\newblock Spaces with nonpositive curvature and their ideal boundaries.
\newblock {\em Topology}, 39(3):549--556, 2000.

\bibitem{GardamWoodhouse17}
Giles Gardam and Daniel~J. Woodhouse.
\newblock The geometry of one-relator groups satisfying a polynomial
  isoperimetric inequality.
\newblock To Appear in Proc. AMS. Preprint at: arXiv 1806.08196.

\bibitem{Gersten94}
S.~M. Gersten.
\newblock The automorphism group of a free group is not a {${\rm CAT}(0)$}
  group.
\newblock {\em Proc. Amer. Math. Soc.}, 121(4):999--1002, 1994.

\bibitem{KarrassPietrowskiSolitar73}
A.~Karrass, A.~Pietrowski, and D.~Solitar.
\newblock Finite and infinite cyclic extensions of free groups.
\newblock {\em J. Austral. Math. Soc.}, 16:458--466, 1973.
\newblock Collection of articles dedicated to the memory of Hanna Neumann, IV.

\bibitem{Kim04}
Goansu Kim.
\newblock On the residual finiteness of fundamental groups of graphs of certain
  groups.
\newblock {\em J. Korean Math. Soc.}, 41(5):913--920, 2004.

\bibitem{Wise96}
Daniel~T. Wise.
\newblock A non-{H}opfian automatic group.
\newblock {\em J. Algebra}, 180(3):845--847, 1996.

\bibitem{WiseGerstenRevisited}
Daniel~T. Wise.
\newblock Cubular tubular groups.
\newblock {\em Trans. Amer. Math. Soc.}, 366(10):5503--5521, 2014.

\bibitem{Woodhouse15VirtuallySpecial}
Daniel~J. Woodhouse.
\newblock Classifying virtually special tubular groups.
\newblock To appear in Groups, Geometry, and Dynamics. Available at
  http://arxiv.org/abs/1602.01952.

\bibitem{WoodhouseTubularAction}
Daniel~J. Woodhouse.
\newblock Classifying finite dimensional cubulations of tubular groups.
\newblock {\em Michigan Math. J.}, 65(3):511--532, 2016.

\end{thebibliography}
\end{document}